\pgfplotsset{compat=1.15}
\theoremstyle{plain}
\newtheorem{theorem}{Theorem}
\newtheorem{lemma}{Lemma}
\newtheorem{corollary}{Corollary}
\newtheorem{proposition}{Proposition}
\theoremstyle{definition}
\newtheorem{definition}{Definition}
\newtheorem{remark}{Remark}
\crefname{theorem}{Theorem}{Theorems}
\crefname{lemma}{Lemma}{Lemmas}
\crefname{proposition}{Proposition}{Propositions}
\crefname{corollary}{Corollary}{Corollaries}
\crefname{definition}{Definition}{Definitions}
\newcommand{\R}[0]{\mathbb R}
\newcommand{\Ker}[0]{\mathrm{Ker}\;}
\renewcommand{\dim}[0]{\mathrm{dim}\;}
\newcommand{\Vect}{\mathrm{span}~}
\newcommand{\proj}[2]{\mathrm{proj}_{#1}\left(#2\right)}
\newcommand{\norm}[1]{\left\lVert#1\right\rVert}
\newcommand{\conv}{\mathrm{conv}}
\newcommand{\dataset}[0]{\mathcal D}
\newcommand{\sufficient}{sufficient decision dataset}
\newcommand{\Sufficient}{Sufficient Decision Dataset}
\newcommand{\suff}{sufficient}
\newcommand{\Suff}{Sufficient}
\newcommand{\cQ}{\mathcal Q}
\newcommand{\dualpoly}[2]{#1^\star\left(#2\right)}
\newcommand{\FD}{\mathrm{FD}}
\newcommand{\spc}[1]{\mathrm{dir}\left(#1\right)}
\newcommand{\ctrue}[0]{c_{\text{true}}}
\newcommand{\Eb}{\mathbb{E}}
\newcommand{\Prob}{{\rm{Prob}}}
\newcommand{\cX}{\mathcal{X}}
\newcommand{\cC}{\mathcal{C}}
\newcommand{\cD}{\mathcal{D}}
\newcommand{\cP}{\mathcal{P}}
\renewcommand{\Re}{\mathbb{R}}
\newcommand{\integ}{\mathbb{N}}
\DeclareMathOperator*{\argmin}{arg\,min}
\newcommand{\dir}[1]{\mathrm{dir}\left(#1\right)}
\title{What Data Enables Optimal Decisions? \\An Exact Characterization for Linear Optimization
}
\author{
\vspace{0.1in}
Omar Bennouna 
\; Amine Bennouna
\; Saurabh Amin
\; Asuman Ozdaglar\\
MIT\\
\texttt{\{omarben, amineben, amins, asuman\}@mit.edu}
}
\date{}
\begin{document}

\maketitle

\begin{abstract}
We study the fundamental question of how informative a dataset is for solving a given decision-making task. In our setting, the dataset provides partial information about unknown parameters that influence task outcomes. Focusing on linear programs, we characterize when a dataset is sufficient to recover an optimal decision, given an uncertainty set on the cost vector. Our main contribution is a sharp geometric characterization that identifies the directions of the cost vector that matter for optimality, relative to the task constraints and uncertainty set.
We further develop a practical algorithm that, for a given task, constructs a minimal or least-costly sufficient dataset. 
Our results reveal that small, well-chosen datasets can often fully determine optimal decisions---offering a principled foundation for task-aware data selection.
\end{abstract}

\section{Introduction}
\label{sec:intro}
Decision-making problems are often performed under incomplete knowledge of the state of nature---that is, they rely on parameters that must be learned or estimated. In practice, experts draw on a combination of domain knowledge and experience from previously solved tasks. With the recent surge in data availability, data-driven decision-making has become a dominant paradigm: data now plays a central role in complementing contextual knowledge to guide decisions.
This paper seeks to understand the informational value of a given dataset with respect to a specific decision-making task. More precisely, we ask: to what extent does a dataset enable recovery of the optimal decision, given task structure and prior knowledge?

The fundamental question of data informativeness---or its value---has several important implications. One key implication is data collection: when faced with a new decision-making task, which data should be collected to effectively generalize prior knowledge to the new setting? Ideally, one seeks the smallest---or least costly---yet most informative dataset. A second major implication lies in computing. Recent successes of large-scale models (e.g., LLMs) have been driven by large-scale data and advances in computing. However, computing cost remains a significant bottleneck. Identifying the most informative subset of data for a specific task can significantly reduce dataset size and, consequently, training costs. 
Quantifying data value also impacts mechanism design in data markets and considerations around privacy.

A general setting for studying this question is as follows.
Suppose the decision-maker's goal is to select a decision $x \in \cX$ minimizing a loss $L(x,\theta)$ which depends on an unknown parameter---state of nature---$\theta \in \Theta$. A dataset $\cD = \{q_1,\ldots,q_N\}$ consists of points at which the loss is evaluated: it provides observations $\{L(q_1,\theta) + \epsilon(q_1,\theta), \ldots, L(q_N,\theta) + \epsilon(q_N,\theta)\}$ partially informing on the true state of nature $\theta$, where the random variable $\epsilon(\cdot,\cdot)$ models noisy observations. The decision-maker can then use the observations along with their prior knowledge (set restriction $\theta \in \Theta$) to select a decision $x \in \cX$ minimizing $L(x,\theta)$. The central question is: which datasets $\cD$ allow to recover the task-optimal decision, given the prior knowledge encoded in the uncertainty set $\Theta$.
In the rest of the paper, we study this question in the setting of linear programming---with linear loss $L$ and polyhedral decision set $\cX$---where the task structure enables a sharp analysis.

To illustrate this formalism with an example, consider a hiring problem in which a decision-maker is given a list of candidates and their resumes and decides which subset of candidates to interview in order to reveal their value, and ultimately make a hiring decision.
This problem has been studied in various settings \citep{purohitHiringUncertainty2019, epsteinSelectionOrdering2024}, including within the popular Secretary Problem \citep{kleinbergMultiplechoicesecretary2005,arlottoUniformlyBounded2019, brayDoesMultisecretary2019}. Prior work typically assumes a sequential, adaptive model, where interviews and hiring decisions occur in an online fashion. However, in many real-world scenarios---such as hiring PhD students or faculty---the set of candidates to interview must be chosen in advance, with hiring decisions made afterward based on all interview outcomes. This latter \textit{offline} setting is a natural instance of the data informativeness problem.

Formally, hiring from $d$ candidates is a decision-making problem where a decision consists of a binary vector $x \in  \cX \subset \{0,1\}^d$ indicating which candidates to hire. 
The feasible set $\cX$ encodes organizational constraints, such as a maximum number of hires $\sum_{i=1}^d x_i \leq k$, or maximum expertise-based quotas $\sum_{i \in I_j} x_i \leq k_j$ for subsets $I_j \subseteq [d]$, to name a few. 
Each candidate has an unknown value $\theta_i$, with $\theta \in \Theta \subset \Re^d$ modeling prior information on these values. It consists here in (i) candidates' resumes, which can be seen as features $\phi=(\phi_1,\dots,\phi_d) \in \Re^{l \times d}$, and (ii) historical hiring data, which is pairs of resumes and observed value $(\hat{\phi}_1,\hat{\theta}_1),\ldots,(\hat{\phi}_l, \hat{\theta}_n)$. Specifically, $\Theta = \{\theta \in \Re^d_+ \; : \; \exists \alpha \in \Re^l, \exists \epsilon,\epsilon' \in {\cal E},{\cal E}' \; \text{s.t.} \;  \theta = \alpha^\top \phi + \epsilon, \; \hat{\theta} = \alpha^\top \hat{\phi} + \epsilon' \}$ with ${\cal E} \subset \Re^d,{\cal E}' \subset \Re^l$ noise sets. The loss incurred by a decision $x$ under values $\theta$ is $L(x, \theta) = -\theta^\top x$---the negative total value of selected candidates.
A dataset $\cD \subset \{ q \in \{0,1\}^d \; : \; \sum_{i=1}^d q_i =1\}$ is a subset of candidates to interview, and each interview $q \in \cD$, $q_j=1$, reveals a, possibly noisy, evaluation of a given candidate $j$'s value $L(q,\theta) = \theta_j$ which complements the prior information embedded in $\Theta$. The goal in this application is to select the smallest subset of candidates to interview (dataset) to recover the optimal hiring decision: that is, the smallest, informative dataset for the given task.
Unlike the classical secretary or online selection problems, the interview set must be chosen in advance. This makes the hiring problem an offline data selection task---a natural fit for our informativeness framework.

The question of data informativeness is related to several extensively studied topics in economics, statistics, computer science, and operations research literature. Below, we highlight a few of these areas and the angle with which they approached this question.

\paragraph{Active Learning, Bandits and Adaptive Experimental Design.} 
In many data-driven settings, informativeness is approached via adaptive, sequential data collection. Active learning \citep{settlesActiveLearningLiterature2009} seeks to sequentially select data points that improve a classifier by minimizing predictive loss, while bandit algorithms \citep{lattimoreBanditAlgorithms2020} aim to optimize decisions through sequential exploration. Adaptive experimental design \citep{zhaoExperimentalDesign2024a} similarly selects experiments to maximize information gain about unknown parameters, often guided by Bayesian criteria such as posterior variance reduction. These approaches rely on real-time feedback to guide data acquisition and typically analyze asymptotic behavior. However, in practical applications---such as surveys or field trials---queries must often be selected in advance, and outcomes are revealed only afterward. In such settings, adaptivity is infeasible. 

In contrast to these paradigms, we study fixed datasets in a non-adaptive, finite-sample regime, focusing on geometric conditions for optimal decision recovery rather than statistical estimation error. We show that, even without adaptivity, one can precisely characterize which datasets are sufficient to recover task-optimal decisions---offering an offline analogue to adaptive data selection. 

\paragraph{Blackwell's Informativeness Theory.}
One of the earliest and most celebrated frameworks for comparing datasets is Blackwell's theory of informativeness \citep{blackwellEquivalentComparisonsExperiments1953}. In this framework, a dataset is abstracted as an experiment, which generates a signal $s \in S$ drawn from a distribution \(P(s|\theta)\), informing on $\theta \in \Theta$, the unknown state of nature. This is equivalent to our framing above, with the signal being $s = (L(q_1,\theta) + \epsilon(q_1,\theta), \ldots, L(q_N,\theta) + \epsilon(q_N,\theta))$ and the noise terms specifying the conditional distribution. Two datasets (experiments) are compared by whether one enables better decision-making across \emph{all loss functions} and priors. Formally, an experiment \(P\) is more informative than experiment \(Q\) if
$$
\inf_{\delta:S \to \cX} \Eb_{\theta \sim \pi,\; s \sim P(\cdot|\theta)}[L(\delta(s),\theta)] 
\leq 
\inf_{\delta:S \to \cX} \Eb_{\theta \sim \pi,\; s \sim Q(\cdot|\theta)}[L(\delta(s),\theta)], \quad \text{for all loss $L$ and prior $\pi$}
$$
Blackwell's seminal result shows that this criterion is equivalent to several elegant characterizations, notably through the notion of garbling \citep{deoliveiraBlackwellsinformativeness2018}.

Blackwell's informativeness criterion imposes a strict requirement: it compares datasets by whether they enable better decisions across \textit{all possible tasks}.
In contrast, our work fixes the decision task (loss function $L$ and structure $\cX$) and asks which datasets suffice to recover the task-optimal decision. This restriction aligns better with practical applications but also makes the informativeness question more delicate: as \citet{lecamComparisonExperimentsShort1996} observed, such questions may become ``complex or impossible depending on the statistician’s goal''. Whereas Blackwell compares datasets by their universal utility, our work develops a tractable, task-specific notion of informativeness grounded in the structure of the decision task itself. 


\paragraph{Influence Functions and Robust Statistics.}
Influence functions, originating in robust statistics \citep{huberRobustEstimation1992, hampelRobustStatisticsApproach1986}, quantify the local impact of individual data points on estimators and have recently received renewed interest \citep{broderickAutomaticFiniteSampleRobustness2023}.
Similar approaches include DataShapely \citep{jiangOpenDataValUnified2023, jiangOpenDataValUnified2023} and DataModels \citep{ilyasDatamodelsUnderstanding2022, dassDataMILSelecting2025}.
These methods typically analyze how \textit{small perturbations} to a dataset affect the output of a \textit{fixed estimator}. However, a key limitation of this approach is that data value is generally ``non-additive'': the informativeness of an individual data point is not intrinsic, but rather related to the data set as a whole. Our focus is on the joint informativeness of the full dataset---characterizing when a collection of observations, as a whole, suffices to recover the task-optimal decision. Joint informativeness, combinatorial in nature, is a more challenging problem \citep{freundPracticalRobustnessAuditing2023,rubinsteinRobustnessAuditingLinear2024}. 
Furthermore, while influence functions assess sensitivity in \textit{estimation} problems under \textit{fixed} inference procedures, our framework evaluates data informativeness with respect to a \textit{decision task}, at a dataset-level \textit{independently} of any specific inference or optimization procedure. That is, in contrast to estimator-specific influence methods, our work characterizes dataset-level informativeness based solely on the structure of the decision task. 

Data informativeness is a fundamental problem that relates to multiple literature streams---such as Stochastic Probing \citep{weitzmanOptimalSearch1979,singlaPriceInformationCombinatorial2018, gallegoConstructiveProphet2022}, Optimal Experimental Design \citep{chalonerBayesianExperimental1995,singhApproximationAlgorithms2020}
and Algorithms with Predictions \citep{mitzenmacherAlgorithmsPredictions2020}---but a detailed comparison is beyond the scope of this paper.

\paragraph{Contributions.}
This paper addresses the problem of evaluating the informativeness of datasets relative to a specific decision-making task. We study informativeness in the sense of being able to recover the task's optimal solution.
This problem is challenging: it is combinatorial in nature, requiring assessment of the value of different combinations of data points. 
Moreover, informativeness in decision-making is difficult to quantify. One must identify how information in a dataset is relevant to decisions in the feasible set $\cX$, relative to prior information encoded in the uncertainty set $\Theta$.

To be able to derive precise insights, we focus on tasks that can be formulated as linear programs---a broad and expressive class of decision-making problems whose geometric structure enables precise theoretical analysis. 
Our main contributions are as follows:

\begin{itemize}
    \item 
    \textbf{Geometric Characterization of Dataset Sufficiency:}
We prove a necessary and sufficient condition (\cref{thm: relatively open characterization}) under which a dataset is \textit{sufficient} to recover the optimal decision for a linear program under cost uncertainty. This condition is framed geometrically: a dataset is sufficient if it spans the task-relevant directions that govern what can change the optimal solution, given the structure of the feasible set $\cX$ and the uncertainty set $\Theta$.

\item
\textbf{Constructive Characterization via Reachable Optimal Solutions:}
We show that the span of relevant directions for dataset sufficiency can equivalently be expressed as the span of differences between optimal solutions under different cost vectors in the uncertainty set. This characterization (\cref{thm:span delta is dir x}) provides an algorithmically accessible formulation for evaluating and constructing sufficient datasets.

 \item
\textbf{Efficient Data Collection Algorithm:}
Building on these characterizations, we develop an iterative algorithm that constructs a minimal sufficient dataset. When the uncertainty set is polyhedral, the algorithm terminates in a number of steps equal to the size of the minimal sufficient dataset, and each step involves solving a tractable mixed-integer program.
\end{itemize}


Our results reveal that small, well-chosen datasets can often fully determine optimal decisions---offering a principled foundation for offline data selection.

\section{Problem Formulation}\label{sec:problem-formulation}

We study decision-making tasks modeled as linear programs (LPs). That is where the loss $L(x,\theta) = \theta^\top x$ is linear, and the decision set $\cX=\{x\in \R^d,\; Ax=b,\; x\geq 0\}$ is a polyhedron, for $A\in \R^{m\times d},\; b\in \R^m$.
The decision-maker's task is then to solve the LP 
\begin{align}
    \min_{x\in \cX}c^\top x,\label{mainproblem}
\end{align}
where $\cX$ is assumed to be bounded.
The unknown parameter---or state of nature---here is the cost vector $c$.
The decision-maker only knows it to be in some given uncertainty set $\cC\subset \R^d$, which captures prior information on $c$ (these are $\theta$ and $\Theta$). Naturally, this is equivalent to having uncertainty in the constraint right-hand side $b \in \Re^m$ by studying the dual problem instead.

To solve the linear program, the decision-maker can complement their knowledge $c \in \cC$ by data on the task.
A dataset $\cD \subset \R^d$ consists of a set of queries to evaluate the objective function. That is a dataset gives access to the observations $c^\top q$ for $q\in \cD$. We focus on the noiseless setting, where each observation $c^\top q$ is exact. This simplification enables a sharp characterization of informativeness. We then show how the core insights naturally extend to noisy observations in \cref{prop:noisy approximation}.

The fundamental question we seek to address is which datasets are \textit{sufficient} to solve the linear program. We formalize such a property next. 
Here $\cP(\cX)$ denotes subsets of $\cX$.

\begin{definition}[\Sufficient] \label{def:sufficient}
    A set $\cD:=\{q_1,\dots,q_N\}$ is a \sufficient\ for uncertainty set $\cC$ and decision set $\cX$ if there exists a mapping $\hat X:\R^{N} \longrightarrow \cal P(\cX)$ such that
    \begin{align*}
        \forall c \in \cC, \quad \hat X\left(c^\top q_1,\dots,c^\top q_N\right)=\arg\min_{x\in \cX}c^\top x.
    \end{align*}
When there is no ambiguity on $\cC$ and $\cX$, we simply say that $\mathcal D$ is a \sufficient{}.
\end{definition}

\cref{def:sufficient} states that a dataset is \suff\ if there exists a mapping that can recover the optimal solution of the decision-making task using \textit{only} the dataset’s observations and prior information ($c \in \cC$). 
 Importantly, data collection here is \textit{non-adaptive}: the decision-maker has to commit to a set of queries in advance (rather than sequentially) and then decides based on all the revealed outcomes.

Naturally, $\cD = \{e_1,\ldots,e_d\}$, where $(e_i)_{i\in [d]}$ are canonical basis vectors is a \sufficient. In fact, observing $c^\top e_i = c_i$ for all $i \in [d]$ amounts to fully observing $c$, and solving the linear program with complete information with $\hat{X}((c^\top q)_{q \in \cD}) = \hat{X}(c) := \argmin_{x \in \cX} c^\top x$. The question is then whether there exist other, potentially smaller \suff\ datasets. That is, what is the least amount of information required to solve the task? As we will show, whether a dataset is \suff\ depends critically on the uncertainty set $\cC$ and the feasible region $\cX$, since these jointly determine which directions of $c$ affect the optimal decision. From a data collection and computational efficiency perspective, the smallest such data sets $\cD$ are of interest.

If the goal is to solve the linear program \eqref{mainproblem}, 
a natural relaxation of \cref{def:sufficient} is to require only that a dataset permits recovery of \textit{some} optimal solution, rather than the \textit{entire set} of optimal solutions.
We show in the next proposition that, under mild structural assumptions, this property is equivalent to the property of Definition \ref{def:sufficient}. This means that any dataset that recovers one solution also recovers all solutions. The proof of this equivalence is nontrivial and relies on several structural results we develop later in the paper.

\begin{proposition}[One vs All Optimal Solutions] \label{prop:equivalence-argmin}
        Let $\cC$ be an open convex set and $\cD:=\{q_1,\dots,q_N\}$ a dataset. The following are equivalent:
        \begin{enumerate}
            \item \label{item:fullargmin} There exists a mapping $\hat X:\R^{N} \longrightarrow\cal P(\cX)$ such that
            \(
                \forall c \in \cC, \; \hat X\left(c^\top q_1,\dots,c^\top q_N\right)=\arg\min_{x\in \cX}c^\top x.
            \)

            \item \label{item:singlesol} There exists a mapping $\hat{x}:\R^N\longrightarrow \cX$ such that
                \(
                    \forall c\in \cC,\; \hat{x}\left(c^\top q_1,\dots,c^\top q_N\right)\in \arg\min_{x\in \cX}c^\top x. \label{condition for suff}
                \)
        \end{enumerate}
    
    \end{proposition}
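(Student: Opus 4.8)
The plan is to treat the two implications separately. The direction (\ref{item:fullargmin}) $\Rightarrow$ (\ref{item:singlesol}) is immediate: since $\cX$ is nonempty and compact, $\argmin_{x\in\cX}c^\top x$ is nonempty for every $c$, so $\hat X$ maps every observation vector $o$ attained on $\cC$ to a nonempty subset of $\cX$; taking $\hat x(o)$ to be an arbitrary element of $\hat X(o)$ (and an arbitrary point of $\cX$ otherwise) gives the required selection. The substance lies in (\ref{item:singlesol}) $\Rightarrow$ (\ref{item:fullargmin}).

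For that direction, fix an observation vector $o$ attained on $\cC$ and set $\cC_o := \{c\in\cC : c^\top q_i = o_i \text{ for all } i\in[N]\}$. Writing $V:=\spann(q_1,\dots,q_N)$, the set $\cC_o$ is the intersection of the open set $\cC$ with an affine translate of $V^\perp$, hence it is nonempty and relatively open in that affine subspace. By (\ref{item:singlesol}), the single point $x^\star := \hat x(o)$ lies in $\argmin_{x\in\cX}c^\top x$ simultaneously for \emph{every} $c\in\cC_o$. The crux of the argument is the claim that this already forces $\argmin_{x\in\cX}c^\top x = \argmin_{x\in\cX}c'^\top x$ for all $c,c'\in\cC_o$. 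Granting the claim, one defines $\hat X(o):=\argmin_{x\in\cX}c^\top x$ for an arbitrary $c\in\cC_o$ (well-defined by the claim; set arbitrarily off the range of the observation map), and this $\hat X$ witnesses (\ref{item:fullargmin}).

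To prove the claim, fix $c,c'\in\cC_o$ (the case $c=c'$ being trivial). Since $\cC_o$ is relatively open in $c'+V^\perp$ and $c'-c\in V^\perp$, the perturbation $\tilde c := c' + t(c'-c)$ lies in $\cC_o$ for all small enough $t>0$, and $c' = \tfrac{1}{1+t}\tilde c + \tfrac{t}{1+t}c$ is a convex combination of $\tilde c$ and $c$. As $x^\star$ is optimal for each of $c,c',\tilde c$, for any $y\in\argmin_{x\in\cX}c'^\top x$ we obtain
$$
c'^\top x^\star = \tfrac{1}{1+t}\tilde c^\top x^\star + \tfrac{t}{1+t}c^\top x^\star \;\le\; \tfrac{1}{1+t}\tilde c^\top y + \tfrac{t}{1+t}c^\top y = c'^\top y = c'^\top x^\star,
$$
using optimality of $x^\star$ for $\tilde c$ and for $c$ in the inequality. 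Equality throughout forces, in particular, $c^\top y = c^\top x^\star = \min_{x\in\cX}c^\top x$, so $y\in\argmin_{x\in\cX}c^\top x$; hence $\argmin_{x\in\cX} c'^\top x \subseteq \argmin_{x\in\cX} c^\top x$. Swapping the roles of $c$ and $c'$ (now perturbing to $\tilde c' := c + t(c-c')\in\cC_o$) yields the reverse inclusion, proving the claim.

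The main obstacle is exactly this claim: upgrading ``there is a common optimizer of $c$ and $c'$'' to ``the optimal solution sets of $c$ and $c'$ coincide''. The perturbation argument above is what bridges this gap, and it is the place where openness of $\cC$ enters. One should still check the bookkeeping — nonemptiness of $\cC_o$ and of $\cX$, the fact that no measurability is required of $\hat x$ or $\hat X$, and that observation vectors outside the range of $c\mapsto(c^\top q_1,\dots,c^\top q_N)$ are handled by the arbitrary choice — but these are routine.
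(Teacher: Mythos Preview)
Your proof is correct and takes a genuinely different route from the paper's. The paper proves (\ref{item:singlesol}) $\Rightarrow$ (\ref{item:fullargmin}) by contradiction, invoking the full machinery of Theorem~\ref{thm: relatively open characterization} (the characterization via relevant extreme directions $\Delta(\cX,\cC)$) together with a lemma guaranteeing that every extreme point of $\cX$ is the \emph{unique} optimizer for some cost vector; it then constructs two costs in $\cC$ with identical observations but disjoint optimal sets. Your argument is far more elementary: fixing the fiber $\cC_o$, you exploit its relative openness to write $c'$ as a strict convex combination of $c$ and a perturbed $\tilde c$, both still in $\cC_o$, and then the common optimizer $x^\star$ forces equality in the convex combination of optimal values, pinning down $c^\top y = c^\top x^\star$. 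This avoids the forward reference to Theorem~\ref{thm: relatively open characterization} entirely, uses no polyhedral structure of $\cX$ (only nonempty compactness), and in fact never uses convexity of $\cC$---only openness. The paper's approach, by contrast, ties the proposition into the paper's central geometric narrative, but at the cost of making a logically simple equivalence depend on the main theorem.
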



 Notice that observing $c^\top q$ for all $q\in \cD$ is equivalent to observing the projection of $c$ onto the span of $\cD$. This implies that \cref{def:sufficient} is equivalent to the following characterization, which gives a valuable perspective. For any subspace $F\subset \R^d$ and $u\in \R^d$, we denote $u_F$ the projection of $u$ in $F$.
\begin{proposition}\label{prop:sufficient:projections}
    $\dataset :=\{q_1,\dots,q_N\}$ is a \sufficient\ for uncertainty set $\cC$ and decision set $\cX$ if and only if 
    \begin{align*}
        \forall c,c'\in \cC, \quad c_{\Vect \cD}=c'_{\Vect \cD}\Longrightarrow \arg\min_{x\in \cX}c^\top x = \arg\min_{x\in \cX}c'^\top x.
    \end{align*}
\end{proposition}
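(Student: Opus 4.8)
The plan is to make precise the remark preceding the statement: observing $c^\top q$ for every $q\in\cD$ is informationally equivalent to observing the orthogonal projection $c_F$ of $c$ onto $F:=\Vect\cD$. I would establish this equivalence in both directions. One direction is immediate: since each $q_i\in F$ and $c-c_F\perp F$, we have $c^\top q_i=c_F^\top q_i$ for all $i$, so the vector $(c^\top q_1,\dots,c^\top q_N)$ is a function of $c_F$. For the converse, $c_F$ is the \emph{unique} element of $F$ whose inner product with each $q_i$ equals $c^\top q_i$: if $w\in F$ satisfies $\langle w,q_i\rangle=\langle c,q_i\rangle$ for all $i$, then $c_F-w\in F=\Vect\{q_1,\dots,q_N\}$ is orthogonal to every $q_i$, hence (writing $c_F-w$ as a combination of the $q_i$) orthogonal to itself, so $w=c_F$. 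Note this argument does not require the $q_i$ to be linearly independent, so no invertibility of the Gram matrix is needed; in particular $c_F$ is a well-defined function of $(c^\top q_1,\dots,c^\top q_N)$.

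For the ($\Rightarrow$) direction I would take a \sufficient\ $\cD$ with witnessing map $\hat X$ and any $c,c'\in\cC$ with $c_F=c'_F$. By the first direction above, $c^\top q_i=c_F^\top q_i=c'^\top_F q_i=c'^\top q_i$ for every $i$, hence $\arg\min_{x\in\cX}c^\top x=\hat X(c^\top q_1,\dots,c^\top q_N)=\hat X(c'^\top q_1,\dots,c'^\top q_N)=\arg\min_{x\in\cX}c'^\top x$, which is exactly the claimed implication.

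For the ($\Leftarrow$) direction, assuming the projection implication holds, I would explicitly construct the required $\hat X:\R^N\to\cP(\cX)$. For $y$ in the image of the map $c\mapsto(c^\top q_1,\dots,c^\top q_N)$ over $c\in\cC$, choose one preimage $c_y\in\cC$ and set $\hat X(y):=\arg\min_{x\in\cX}c_y^\top x$; for $y$ outside this image, define $\hat X(y)$ arbitrarily. The only point requiring the hypothesis is well-definedness: if $c,c'\in\cC$ both map to $y$, then $c^\top q_i=c'^\top q_i$ for all $i$, so $c_F=c'_F$ by the recoverability direction, hence $\arg\min_{x\in\cX}c^\top x=\arg\min_{x\in\cX}c'^\top x$ by assumption, and the value $\hat X(y)$ is independent of the chosen preimage. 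Finally, for any $c\in\cC$ the choice $c_y=c$ is admissible for $y=(c^\top q_1,\dots,c^\top q_N)$, giving $\hat X(c^\top q_1,\dots,c^\top q_N)=\arg\min_{x\in\cX}c^\top x$, so $\cD$ is a \sufficient.

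I do not expect a substantive obstacle here, since the statement is essentially a reformulation of \cref{def:sufficient}. The one place to be careful is the recoverability of $c_F$ from $(c^\top q_1,\dots,c^\top q_N)$ when the query vectors are linearly dependent (it is tempting, but unnecessary, to pass to a basis of $F$), and the well-definedness check in the construction of $\hat X$, which is the only step where the hypothesis is actually invoked.
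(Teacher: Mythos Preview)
Your proposal is correct and follows essentially the same approach as the paper's proof: both directions hinge on the equivalence $c_F=c'_F\Longleftrightarrow (c^\top q_i)_{i\in[N]}=(c'^\top q_i)_{i\in[N]}$, with the $(\Leftarrow)$ direction constructing $\hat X$ on the image of $c\mapsto(c^\top q_i)_i$ via an arbitrary preimage and checking well-definedness. Your write-up is slightly more careful in justifying recoverability of $c_F$ when the $q_i$ are dependent, but the argument is otherwise identical.
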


In words, \cref{prop:sufficient:projections} formulate that a dataset $\cD$ is \suff\ if any two cost vectors that are equivalent from the perspective of the information provided by $\cD$ (and $\cC$) lead to the same optimal solutions in the decision-making problem.

This characterization suggests a natural algorithm for solving the LP \eqref{mainproblem} when given a sufficient dataset \(\mathcal{D} = \{q_1, \ldots, q_N\}\).
Suppose we observe values \(o_i = c^\top q_i, i\in [N]\) for an unknown cost vector \(c \in \mathcal{C}\). We then compute $\hat{c} \in \argmin \{ \sum_{i=1}^{N} (c'^\top q_i - o_i)^2 \; : \; c' \in \mathcal{C} \}$ and use \(\hat{c}\) to solve the decision problem \(\min_{x \in \mathcal{X}} \hat{c}^\top x\). This procedure recovers the projection of \(c\) onto \(\Vect \mathcal{D}\) while respecting the prior of \(\mathcal{C}\). This ensures \( \hat{c}_{\Vect \mathcal{D}} = c_{\Vect \mathcal{D}} \) as $c \in \cC$, and since the dataset is sufficient, guarantees that the resulting decision is task-optimal (\cref{prop:sufficient:projections}).

When the observations are noisy, a sufficient dataset can still yield a correct decision.
In particular, estimating an approximate cost vector \(\hat{c}\) via least-squares from noisy observations still leads to an optimal decision, as long as the noise is sufficiently small.

\begin{proposition}[Noisy Observations] \label{prop:noisy approximation}
    Let $\cC\subset \R^d$ be an open set, and $\cD:=\{q_1,\dots,q_r\}$ a \sufficient{} for $\cC$. Let $c\in \cC$. Let $\varepsilon_1,\dots,\varepsilon_r\in \R$, and for all $i\in [r]$, $o_i=c^\top q_i + \varepsilon_i$. Let $\hat{c}\in \argmin \{\sum_{i=1}^{r}(c'^\top q_i - o_i)^2 \; : \; c' \in \cC\}$. There exists $\kappa>0$ such that if $\norm{\varepsilon}<\kappa$, then $\arg\min_{x\in \cX}\hat{c}^\top x \subset \arg\min_{x\in \cX}c^\top x.$
\end{proposition}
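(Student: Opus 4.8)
The plan is to argue by contradiction through a limiting (subsequence) argument that isolates the only part of the least-squares estimate the data actually pins down---its projection onto $\Vect \cD$---and invokes the structural characterizations established earlier to certify that no other part of $\hat{c}$ can matter. So suppose no such $\kappa$ exists. Then there is a sequence of noise vectors $\varepsilon_n \to 0$, with observations $o^{(n)}_i = c^\top q_i + \varepsilon_{n,i}$ and associated minimizers $\hat{c}_n \in \cC$ of the least-squares problem $\min\{\sum_{i=1}^{r}((c')^\top q_i - o^{(n)}_i)^2 \;:\; c' \in \cC\}$, such that $\arg\min_{x\in\cX}\hat{c}_n^\top x \not\subseteq G$, where $G := \arg\min_{x\in\cX}c^\top x$. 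Since $\arg\min_{x\in\cX}\hat{c}_n^\top x$ is a face of the polytope $\cX$ and $G$ is convex, this face contains a vertex $v_n$ of $\cX$ with $v_n\notin G$; as $\cX$ has finitely many vertices, we pass to a subsequence along which $v_n = v$ for a fixed vertex $v\notin G$. Finally, feasibility of $c\in\cC$ for each least-squares problem gives $\sum_{i=1}^{r}(\hat{c}_n^\top q_i - o^{(n)}_i)^2 \le \norm{\varepsilon_n}^2 \to 0$, hence $\hat{c}_n^\top q_i \to c^\top q_i$ for every $i$, and since $q_1,\dots,q_r$ span $\Vect\cD$ this forces $(\hat{c}_n)_{\Vect\cD}\to c_{\Vect\cD}$.

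Next, fix any vertex $g$ of $\cX$ lying in $G$. Then $g$ is optimal for $c\in\cC$ and $v$ is optimal for $\hat{c}_n\in\cC$, so $v$ and $g$ are both optimal solutions reachable under cost vectors in $\cC$; hence $v-g$ lies in the span of all such differences, which by \cref{thm:span delta is dir x} is exactly the subspace of task-relevant directions, and by \cref{thm: relatively open characterization} that subspace is contained in $\Vect\cD$ since $\cD$ is a \sufficient{}. Thus $v-g\in\Vect\cD$. Optimality of $v$ for $\hat{c}_n$ gives $\hat{c}_n^\top(v-g)\le 0$ for all $n$, while $v-g\in\Vect\cD$ yields $\hat{c}_n^\top(v-g) = \langle (\hat{c}_n)_{\Vect\cD},\, v-g\rangle \to \langle c_{\Vect\cD},\, v-g\rangle = c^\top(v-g)$. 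But $g\in G$ while $v\notin G=\arg\min_{x\in\cX}c^\top x$ gives $c^\top(v-g)>0$, contradicting $\hat{c}_n^\top(v-g)\le 0$. This contradiction establishes the existence of $\kappa$ and, along the way, the inclusion $\arg\min_{x\in\cX}\hat{c}^\top x\subseteq \arg\min_{x\in\cX}c^\top x$ for $\norm{\varepsilon}$ small.

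The main obstacle, and the reason a direct perturbation argument is awkward, is that $\hat{c}$ depends on $\varepsilon$ and need not remain in a fixed compact subset of $\cC$: the least-squares objective constrains only $(\hat{c})_{\Vect\cD}$, leaving the orthogonal component essentially free in the open set $\cC$. A tempting shortcut---replace $\hat{c}$ by a vector with the exact $\Vect\cD$-component, invoke \cref{prop:sufficient:projections}, and then appeal to robustness of the LP's optimal face under small cost perturbations---stumbles precisely because the robustness radius would depend on that uncontrolled orthogonal component. The subsequence argument sidesteps this by only ever testing $\hat{c}_n$ against directions $v-g$ that the structural theorems certify to lie in $\Vect\cD$, which is exactly the subspace on which $\hat{c}_n$ converges. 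The sole ingredient genuinely drawing on the earlier development is the inclusion $v-g\in\Vect\cD$; everything else is elementary.
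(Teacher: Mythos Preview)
Your argument is clean and the contradiction mechanism is sound, but there is one genuine gap: you invoke \cref{thm:span delta is dir x} to conclude $\dir{\dualpoly{\cX}{\cC}}\subset\Vect\cD$, and that theorem requires $\cC$ to be \emph{convex}. The proposition, however, only assumes $\cC$ is open. Without convexity, the inclusion $\dir{\dualpoly{\cX}{\cC}}\subset\Vect\Delta(\cX,\cC)$ can fail---for instance, if $\cC$ is a disjoint union of two small balls whose associated optimal vertices differ, a sufficient $\cD$ need only separate the balls via their $\Vect\cD$-projections, and the difference of those vertices need not lie in $\Vect\cD$. So the step ``$v-g\in\Vect\cD$'' is not justified at the stated generality. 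Under the additional hypothesis that $\cC$ is convex (which is the standing assumption in the paper's main results), your proof is correct.

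It is worth contrasting your route with the paper's. The paper does not invoke \cref{thm: relatively open characterization} or \cref{thm:span delta is dir x} at all; instead it first bounds $\norm{\hat{c}_{\Vect\cD}-c_{\Vect\cD}}$ explicitly via the singular values of the query matrix, and then proves a stability lemma: for $c'\in\cC$ with $\norm{c'_{\Vect\cD}-c_{\Vect\cD}}$ small, $\arg\min_{x\in\cX}c'^\top x\subset\arg\min_{x\in\cX}c^\top x$. That lemma is established by a compactness/subsequence argument that produces a limit $c'\in\cC$ with $c'_{\Vect\cD}=c_{\Vect\cD}$ and appeals directly to \cref{prop:sufficient:projections}. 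Your approach sidesteps the need for any limit of $\hat{c}_n$---a real advantage, since (as you observe) the orthogonal component of $\hat{c}_n$ is uncontrolled---by testing only along directions $v-g$ that the structural theorems certify to lie in $\Vect\cD$. The price is the convexity hypothesis. If you want to match the proposition's stated generality, you would need an argument for $v-g\in\Vect\cD$ that does not route through \cref{thm:span delta is dir x}.
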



\section{Characterizing \Suff\ Datasets}

Given an uncertainty set $\cC$ and a decision set $\cX$, we would like to characterize \sufficient{}s and eventually construct such datasets. As in Blackwell's theory, the difficulty of such characterizations depends on the richness of the uncertainty set $\cC$. In fact, the first results by \cite{blackwellComparisonreconnaissances1949, blackwellComparisonExperiments1951} and 
\cite{shermanTheoremHardy1951} were for a set $\cC$ with only two elements. That is, the data needs to distinguish only two alternative states of nature. The result was later extended to the finite sets by \cite{blackwellEquivalentComparisonsExperiments1953} and then to an infinite sets with regularity conditions by \cite{boll_comparison_1955}.

\subsection{Characterization Under No Prior Knowledge}

We begin with the case of no prior knowledge, i.e., \(\mathcal{C} = \Re^d\), which isolates how the structure of the decision set \(\mathcal{X}\) alone determines what information is necessary to recover the optimal solution. We will then study the case of convex sets. To formulate our result, define $F_0=\Vect\{e_i,\;i\in [d],\; \exists x\in \cX,\; x_i\neq 0\}$ where $e_i$ is the $i-$th element of the canonical basis. $F_0$ captures the coordinates that can take non-zero values in feasible solutions of $\cX$. That is $F_0^\perp$ captures coordinates that are identically zero in all feasible solutions: $e_i \in F_0^\perp \implies \forall x \in \cX, \; x_i=0$.

\begin{proposition} 
 \label{prop:suff:vectorspace}
 Suppose $\cC = \Re^d$.
 $\dataset$ is a \sufficient\ if and only if $F_0 \cap \Ker A \subset \Vect \cD$.
Furthermore, when the condition $F_0 \cap \Ker A \subset \Vect \cD$ is not satisfied, for any mapping $\hat{x}:\R^N\longrightarrow \cX$, and any $K>0$, there exists $c\in \R^d$ such that $c^\top \hat{x}\left(c^\top q_1,\dots,c^\top q_N\right)\geq K + \min_{x\in \cX}c^\top x$. 
\end{proposition}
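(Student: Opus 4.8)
The plan is to prove both directions via the projection characterization of \cref{prop:sufficient:projections}, together with the observation that when $\cC = \Re^d$ the optimal face of the LP depends only on the component of $c$ along $F_0 \cap \Ker A$. First I would establish this reduction: write $c = c_1 + c_2 + c_3$ where $c_1 \in F_0 \cap \Ker A$, $c_2 \in F_0 \cap (\Ker A)^\perp$ restricted appropriately, and $c_3 \in F_0^\perp$. Since every $x \in \cX$ satisfies $Ax = b$, shifting $c$ by any element of the row space of $A$ (i.e.\ of $(\Ker A)^\perp$) changes $c^\top x$ by the constant $\langle \cdot, \text{lift}\rangle$ — more carefully, adding $A^\top y$ to $c$ adds $y^\top b$ to the objective for every feasible $x$, hence does not change $\argmin$. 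Likewise, adding any vector in $F_0^\perp$ does not change $c^\top x$ for any $x \in \cX$ since those coordinates vanish on $\cX$. Therefore $\argmin_{x\in\cX} c^\top x$ depends only on the projection of $c$ onto $F_0 \cap \Ker A$. This gives the ``if'' direction immediately: if $F_0 \cap \Ker A \subset \Vect \cD$, then $c_{\Vect \cD} = c'_{\Vect\cD}$ forces the $F_0 \cap \Ker A$-components of $c$ and $c'$ to agree, hence the optimal sets agree, so $\cD$ is sufficient by \cref{prop:sufficient:projections}.

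For the ``only if'' direction and the accompanying quantitative (unboundedness) statement, I would argue the contrapositive constructively. Suppose $F_0 \cap \Ker A \not\subset \Vect\cD$; pick a unit vector $v \in (F_0 \cap \Ker A) \cap (\Vect\cD)^\perp$, so $v \neq 0$, $Av = 0$, $v$ is orthogonal to every query, and $v$ has a nonzero coordinate on some index $i$ with $\exists x\in\cX, x_i\neq 0$. Since $v \in \Ker A$ and $\cX$ is a bounded polytope with at least the vertex structure implied by $F_0$, the linear functional $x \mapsto v^\top x$ is non-constant on $\cX$: here I use that $v$ has support meeting the ``active'' coordinates and $v \in \Ker A$, so one can move within $\cX$ (or rather within the affine hull, then check feasibility) in the direction $\pm v$ and change the value — the cleanest route is to take two vertices $x^+, x^-$ of $\cX$ with $v^\top x^+ > v^\top x^-$, whose existence follows because otherwise $v^\top x$ would be constant on $\cX$, forcing $v \perp (\Vect \cX - \Vect\cX)$, contradicting that $v$'s support hits active coordinates while $v \in \Ker A$. (This last step is the main obstacle: showing carefully that a nonzero $v \in F_0\cap\Ker A$ cannot be orthogonal to the difference set of $\cX$; I expect to need that $F_0$ is exactly the span of the union of supports of feasible points, so the linear hull of $\cX$ spans $F_0$ modulo the affine shift, and $\Ker A$ contains the direction space of the affine hull of $\cX$.)

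Granting that $x^+ \neq x^-$ are vertices with $\delta := v^\top(x^+ - x^-) > 0$, I would then fix any reference cost $c_0 \in \Re^d$ and consider, for a scalar $t$, the cost $c_0 + t v$. Because $v \perp \Vect\cD$, the observations satisfy $(c_0 + tv)^\top q_j = c_0^\top q_j$ for all $j$, so any mapping $\hat x$ must return the same point $\bar x := \hat x((c_0^\top q_j)_j)$ for all $t$. Choosing $c_0$ so that neither $x^+$ nor $x^-$ is already forced — e.g.\ $c_0 = 0$, or a small generic perturbation — and then taking $t \to +\infty$ or $t \to -\infty$, the true optimum of $\min_{x\in\cX}(c_0+tv)^\top x$ is attained near $x^-$ (resp.\ $x^+$), while $\bar x$ is fixed; the suboptimality gap is at least $|t| \cdot (\text{positive multiple of } \delta) - O(1)$, which exceeds any prescribed $K$ for $|t|$ large. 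Picking the sign of $t$ for which $\bar x$ is the worse of the two extremes (at least one sign works since $\bar x$ cannot equal both $x^+$ and $x^-$, and more carefully one compares $v^\top \bar x$ against $\min$ and $\max$ of $v^\top x$ over $\cX$), this produces the desired $c = c_0 + tv$ with $c^\top \hat x(\cdots) \geq K + \min_{x\in\cX} c^\top x$, which simultaneously shows $\cD$ is not sufficient and proves the quantitative claim. I would present the reduction lemma first, then the ``if'' direction in one line, then the constructive ``only if''/unboundedness argument, being careful about the genericity of $c_0$ so that the fixed output $\bar x$ is strictly suboptimal for large $|t|$ of the appropriate sign.
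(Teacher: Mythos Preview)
Your ``if'' direction is correct and matches the paper's argument. The ``only if'' / unboundedness part, however, contains a genuine gap at the very first step: from $F_0 \cap \Ker A \not\subset \Vect\cD$ you \emph{cannot} in general pick a nonzero $v \in (F_0 \cap \Ker A) \cap (\Vect\cD)^\perp$. For a concrete counterexample in $\Re^2$, let $F_0 \cap \Ker A = \Vect\{e_1\}$ and $\Vect\cD = \Vect\{e_1 + e_2\}$; then $e_1 \notin \Vect\cD$ so the inclusion fails, yet $(\Vect\cD)^\perp = \Vect\{e_1 - e_2\}$ intersects $\Vect\{e_1\}$ only at the origin. Your perturbation vector $v$ simply need not exist, and the whole construction collapses.

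The repair is easy and in fact removes the obstacle you flagged. Take any $\delta \in F_0 \cap \Ker A$ with $\delta \notin \Vect\cD$ and set $v := \delta_{(\Vect\cD)^\perp}$, which is nonzero. Then $v$ is orthogonal to every query (so observations are unchanged under $c \mapsto c + tv$), and $v^\top \delta = \|v\|^2 > 0$, so $v$ is \emph{not} orthogonal to $F_0 \cap \Ker A$. Once you establish that the direction space of $\cX$ equals $F_0 \cap \Ker A$ (this is the content of the paper's lemma that the feasible directions from any point of $\cX$ span $F_0 \cap \Ker A$), non-constancy of $x \mapsto v^\top x$ on $\cX$ is immediate, and your $t \to \pm\infty$ argument goes through verbatim. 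This is essentially what the paper does, phrased through feasible directions: it picks a feasible direction $\delta$ from the output point $\hat{x}(\cdot)$ with $\delta_{(\Vect\cD)^\perp}\neq 0$ and perturbs $c$ along $-M\,\delta_{(\Vect\cD)^\perp}$, which is your $v$ in disguise.
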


\cref{prop:suff:vectorspace} indicates that, already with no prior knowledge, not all the information on $c$ is required to solve the optimization problem. In fact, the dataset needs only to capture ``relevant'' information for the decision-making task, defined by $\cX$. The proposition shows that these are the directions in the null space of $A$ ($\Ker A$), that act on active variables ($F_0$). 

Let us provide an intuitive explanation for this result. Since every \(x \in \cX\) satisfies \(x_i = 0\) for all \(e_i \in F_0^\perp\), the components of \(c\) along \(F_0^\perp\) do not influence the objective and the dataset \(\cD\) need not capture these directions. Hence, we can, without loss of generality, restrict attention to the subspace \(F_0\) and replace the variable \(x\) with its projection \(x_{F_0}\). The objective function can be decomposed as $x\longmapsto c_{\Ker A}^\top x + c_{(\Ker A)^\perp}^\top x$. 
Because $\cX$ lies in an affine space parallel to \(\Ker A\), any change from a feasible decision $x$ to another feasible decision $x+\delta$ necessarily verifies $\delta \in \Ker A$.
Therefore, \(c_{(\Ker A)^\perp}^\top x\) is constant across all feasible solutions. This means that only the projection of $c$ in $\Ker A$ matters when comparing costs of feasible decisions.


The second part of \cref{prop:suff:vectorspace} formalizes a dichotomy: either the dataset is sufficient and enables optimal decision recovery, or any algorithm may incur arbitrarily large suboptimality in the worst case. This sharp divide, however, is specific to the unstructured case \(\mathcal{C} = \mathbb{R}^d\). As we will see next, imposing structure on \(\mathcal{C}\) significantly enriches the notion of sufficiency.  

\subsection{Characterization under Convex Uncertainty Sets}

The goal now is to characterize \suff\ datasets for any convex uncertainty set $\cC$. We start by introducing some geometric notions that are useful to understand the sufficiency of a dataset.

\begin{definition}[Extreme Points]
An element $x\in \cX$ is an extreme point if and only if there are no $\lambda \in (0,1)$ and $y,z\in \cX$ such that $x=\lambda y + (1-\lambda)z$. The set of extreme points of $\cX$ is denoted $\cX^\angle$.
\end{definition}

From every extreme point, there is a set of \textit{feasible directions} that allow changing the solution while remaining in the polyhedron $\cX$---the feasible region. Out of these feasible directions, \textit{extreme directions} allow moving to ``neighboring'' extreme points.

\begin{proposition}[Feasible and Extreme Directions] \label{feasible directions polyhedral cone}
     For every $x^\star\in \cX^\angle$, we denote $$\FD(x^\star)=\{\delta \in \R^d,\; \exists \varepsilon>0,\;x^\star +\varepsilon \delta \in \cX\}$$
     the set of feasible directions from $x^\star$ in $\cX$. $\FD(x^\star)$ is a polyhedral cone and $\FD(x^\star)\subset F_0 \cap \Ker A$. We denote $D(x^\star)$ the set of extreme directions of $\FD(x^\star)$: non-zero vectors in $\FD(x^\star)$ that cannot be written as a convex combination of two non-proportional elements of $\FD(x^\star)$.
     
\end{proposition}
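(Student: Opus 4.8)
The plan is to give an explicit description of $\FD(x^\star)$ in terms of the support of the extreme point and then read off both claims directly from it. Write $S=\{i\in[d]:x^\star_i>0\}$ for the support of $x^\star$. I would first prove the identity
\[
\FD(x^\star)=\{\delta\in\R^d \;:\; A\delta=0 \text{ and } \delta_i\ge 0 \text{ for all } i\notin S\}.
\]
The inclusion ``$\subseteq$'' is immediate: if $x^\star+\varepsilon\delta\in\cX$ for some $\varepsilon>0$, then $A\delta=\tfrac1\varepsilon\big(A(x^\star+\varepsilon\delta)-Ax^\star\big)=0$, and for $i\notin S$ we have $x^\star_i=0$, so $\varepsilon\delta_i=(x^\star+\varepsilon\delta)_i\ge 0$, hence $\delta_i\ge 0$. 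For ``$\supseteq$'', given $\delta$ with $A\delta=0$ and $\delta_i\ge0$ on $S^c$, I would pick $\varepsilon>0$ small enough that $x^\star_i+\varepsilon\delta_i\ge0$ for each of the finitely many $i\in S$ (possible since $x^\star_i>0$ there); the coordinates in $S^c$ stay nonnegative because $\delta_i\ge0$, and $A(x^\star+\varepsilon\delta)=b$ since $Ax^\star=b$ and $A\delta=0$, so $x^\star+\varepsilon\delta\in\cX$. (Extremality of $x^\star$ is not actually needed for this; it is only relevant for the later notion of extreme directions.)

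Given this formula, the first assertion is immediate: $\FD(x^\star)$ is the intersection of the linear subspace $\Ker A$ with the finitely many half-spaces $\{\delta:\delta_i\ge0\}$, $i\notin S$, hence a polyhedral cone, and it is trivially closed under multiplication by nonnegative scalars. For the containment $\FD(x^\star)\subseteq F_0\cap\Ker A$, the $\Ker A$ part is already in the formula. For the $F_0$ part, I would use that $F_0=\Vect\{e_i:\exists x\in\cX,\ x_i\neq0\}$, so $F_0^\perp$ is spanned by those $e_i$ with $x_i=0$ for every $x\in\cX$; if $e_i\in F_0^\perp$ then in particular $x^\star_i=0$, so $i\notin S$, and for any $\delta\in\FD(x^\star)$ the membership $x^\star+\varepsilon\delta\in\cX$ forces $(x^\star+\varepsilon\delta)_i=0$, i.e. $\delta_i=0$. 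Thus every $\delta\in\FD(x^\star)$ is orthogonal to all of $F_0^\perp$, i.e. $\delta\in(F_0^\perp)^\perp=F_0$, which combined with $\delta\in\Ker A$ gives the claim.

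Finally, $D(x^\star)$ is well defined: since $\FD(x^\star)$ is a polyhedral cone, the Minkowski--Weyl theorem guarantees it has finitely many extreme rays (up to positive scaling), and these together with its lineality space $\FD(x^\star)\cap(-\FD(x^\star))$ generate it, so no further argument is needed beyond invoking this standard fact. I do not expect a genuine obstacle here; the only points requiring care are choosing the step size $\varepsilon$ uniformly over the finitely many support coordinates in the ``$\supseteq$'' direction, and correctly passing between $F_0$ and $F_0^\perp$ --- that is, recognizing that ``$\delta\in F_0$'' is precisely ``$\delta_i=0$ for every coordinate $i$ that vanishes identically on $\cX$''.
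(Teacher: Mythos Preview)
Your proof is correct and follows essentially the same route as the paper: both derive the explicit description $\FD(x^\star)=\{\delta:A\delta=0,\ \delta_i\ge0\text{ for }i\text{ with }x^\star_i=0\}$ (your $S^c$ is the paper's $J$) and read off the polyhedral-cone and containment claims from it. If anything, your justification of $\FD(x^\star)\subset F_0$ is more explicit than the paper's one-line ``since $[d]\setminus I_0\subset J$'', since you spell out that membership in $\cX$ forces $\delta_i=0$ (not merely $\ge0$) on coordinates that vanish identically on $\cX$.
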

In linear programs, optimal solutions are attained in extreme points $\cX^\angle$.
Every extreme point is associated with a set of cost vectors $c$ for which it is optimal. This set forms a cone, as illustrated in \cref{fig: optimality cones} (middle).

\begin{proposition}[Optimality Cones]\label{optimality cone def}
    For every $x^\star\in \cX^\angle$, we denote $\Lambda(x^\star)=\{c\in \R^d \; :\; x^\star \in \arg\min_{x\in \cX}c^\top x\}$. We have 
    \(
        \Lambda(x^\star)=\{c\in \R^d,\; \forall \delta \in D(x^\star),\ c^\top \delta\geq 0\}.
    \)
    For every $\delta \in D(x^\star),$ we denote $F(x^\star,\delta):=\Lambda(x^\star)\cap \{\delta\}^\perp$ the face of the cone $\Lambda(x^\star)$ that is perpendicular to $\delta$. Furthermore, $\Lambda(x^\star)$ is the dual cone of $\FD(x^\star)$.
\end{proposition}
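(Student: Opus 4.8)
The plan is to identify $\Lambda(x^\star)$ with the dual cone of $\FD(x^\star)$, and then pass from $\FD(x^\star)$ to its generating extreme directions $D(x^\star)$.

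First I would rewrite optimality as a variational inequality: $x^\star\in\arg\min_{x\in\cX}c^\top x$ if and only if $c^\top(x-x^\star)\geq 0$ for every $x\in\cX$. I then connect this to feasible directions. On one hand, for every $x\in\cX$ the vector $x-x^\star$ lies in $\FD(x^\star)$ (take $\varepsilon=1$ in the definition, using $x\in\cX$), so if $c^\top\delta\geq 0$ for all $\delta\in\FD(x^\star)$ then $x^\star$ is optimal. Conversely, if $x^\star$ is optimal and $\delta\in\FD(x^\star)$, pick $\varepsilon>0$ with $x^\star+\varepsilon\delta\in\cX$; then $\varepsilon\, c^\top\delta = c^\top\big((x^\star+\varepsilon\delta)-x^\star\big)\geq 0$, hence $c^\top\delta\geq 0$. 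This shows $\Lambda(x^\star)=\{c:\ c^\top\delta\geq 0\ \forall\,\delta\in\FD(x^\star)\}=\FD(x^\star)^\ast$, which is the third assertion; since $\FD(x^\star)$ is a polyhedral cone by \cref{feasible directions polyhedral cone}, $\Lambda(x^\star)$ is a polyhedral cone as well.

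Next I would reduce from $\FD(x^\star)$ to $D(x^\star)$. The key observation is that $\FD(x^\star)$ is pointed: if some nonzero $\delta$ satisfied $\pm\delta\in\FD(x^\star)$, then $x^\star\pm\varepsilon\delta\in\cX$ for small enough $\varepsilon>0$ by convexity, and $x^\star$ would be the midpoint of two distinct feasible points, contradicting $x^\star\in\cX^\angle$. A pointed polyhedral cone is the conical hull of its extreme rays, so $\FD(x^\star)=\mathrm{cone}(D(x^\star))$. Consequently, $c^\top\delta\geq 0$ holds for all $\delta\in\FD(x^\star)$ if and only if it holds for all $\delta\in D(x^\star)$: one direction is the inclusion $D(x^\star)\subseteq\FD(x^\star)$, and the other is that nonnegativity of a linear functional is preserved under nonnegative combinations of generators. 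This yields $\Lambda(x^\star)=\{c\in\R^d:\ \forall\,\delta\in D(x^\star),\ c^\top\delta\geq 0\}$, the first assertion. Finally, for the face statement, fix $\delta\in D(x^\star)\subseteq\FD(x^\star)$; by the above the linear functional $c\mapsto c^\top\delta$ is nonnegative on $\Lambda(x^\star)$, so $\{\delta\}^\perp=\{c:\ c^\top\delta=0\}$ is a supporting hyperplane of $\Lambda(x^\star)$ with inner normal $\delta$, and hence $F(x^\star,\delta)=\Lambda(x^\star)\cap\{\delta\}^\perp$ is an exposed face of $\Lambda(x^\star)$, lying in the hyperplane normal to $\delta$.

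I expect the only delicate point to be the passage from the optimality condition over all of $\cX$ to the condition over the feasible-direction cone and then over its extreme rays — specifically, invoking pointedness of $\FD(x^\star)$ (which follows from extremality of $x^\star$) together with the structural fact that a pointed polyhedral cone is generated by its extreme directions. Everything else is routine convex-geometry bookkeeping, most of it already packaged in \cref{feasible directions polyhedral cone}.
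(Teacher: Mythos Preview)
Your proposal is correct and follows the same logical path as the paper's proof, which is the one-line chain of equivalences $x^\star\in\arg\min_{x\in\cX}c^\top x \Longleftrightarrow c^\top\delta\geq 0$ for all $\delta\in\FD(x^\star)$ $\Longleftrightarrow c^\top\delta\geq 0$ for all $\delta\in D(x^\star)$. You simply fill in the justifications the paper leaves implicit, in particular the pointedness of $\FD(x^\star)$ needed to pass from the full feasible-direction cone to its extreme rays.
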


Notice that since $\cX$ is bounded, for any $c\in \R^d$, there exists $x^\star \in \cX^\angle$ such that $c\in \Lambda (x^\star)$, and consequently $\R^d=\bigcup_{x^\star \in \cX^\angle}\Lambda(x^\star)$ as illustrated in \cref{fig: optimality cones} (middle). Neighboring cones share boundaries corresponding to their faces (\cref{fig: optimality cones}, right), where multiple solutions can be optimal.

\begin{figure}[h] 
    \centering
    \begin{minipage}{0.33\textwidth} 
        \centering
        \scalebox{0.6}{              \begin{tikzpicture}[scale=1.5]

\coordinate (A) at (0,0);
\coordinate (A1) at (1,-2/2);
\coordinate (A2) at (1,1/2);
\coordinate (B) at (2,2);
\coordinate (B1) at (2 -1*0.4,2-2*0.4);
\coordinate (B2) at (2 +1*0.7,2 -1*0.7);
\coordinate (C) at (4,1);
\coordinate (C1) at (4 -1*0.5,1-2*0.5);
\coordinate (C2) at (4 -2.5*0.4,1 +1*0.4);
\coordinate (D) at (3,-1.5);
\coordinate (D1) at (3 -2.5*0.4,-1.5 +1*0.4);
\coordinate (D2) at (3 -1/2 * 0.5,-1.5 +2 *0.5);
\coordinate (E) at (1,-2);
\coordinate (E1) at (1 +2 * 0.5,-2+1 * 0.5);
\coordinate (E2) at (1-1/2 * 0.5,-2+2 *0.5);

\node[anchor=east, text=red, xshift=-5pt] at (A) {\Large $x_1$};
\node[anchor=south, text=blue, yshift=5pt] at (B) {\Large $x_2$};
\node[anchor=west, text=green,xshift=5pt] at (C) {\Large $x_3$};
\node[anchor=north west, text=orange] at (D) {\Large $x_4$};
\node[anchor=north, text=purple, yshift=-5pt] at (E) {\Large $x_5$};

\draw[thick, fill=gray!20] (A) -- (B) -- (C) -- (D) -- (E) -- cycle;

\draw[ thick, blue, ->] (1,-2) -- (1 - 0.447,-2 + 0.894);
\draw[ thick, blue, ->] ( 4,1 ) -- ( 4 -0.894 ,1 +0.447 );

\draw[ thick,blue, ->] ( 1,-2)-- ( 1 +0.970,-2 + 0.244);

\node[anchor=center,blue, font=\large] at (3.6,1.4) {$\delta$};
\node[anchor=center,blue, font=\large] at (0.6,-1.7) {$\delta'$};
\node[anchor=center,blue, font=\large] at (1.6,-2.05) {$\delta''$};

\fill[red] (A) circle (3pt);
\fill[blue] (B) circle (3pt);
\fill[green] (C) circle (3pt);
\fill[orange] (D) circle (3pt);
\fill[purple] (E) circle (3pt);

\draw[red, thick, dashed] (A) -- (A1);
\draw[red, thick, dashed] (A) -- (A2);
\fill[red, opacity=0.2] (A) -- (A1) -- (A2) -- cycle;

\draw[blue, thick, dashed] (B) -- (B1);
\draw[blue, thick, dashed] (B) -- (B2);
\fill[blue, opacity=0.2] (B) -- (B1) -- (B2) -- cycle;

\draw[green, thick, dashed] (C) -- (C1);
\draw[green, thick, dashed] (C) -- (C2);
\fill[green, opacity=0.2] (C) -- (C1) -- (C2) -- cycle;

\draw[orange, thick, dashed] (D) -- (D1);
\draw[orange, thick, dashed] (D) -- (D2);
\fill[orange, opacity=0.2] (D) -- (D1) -- (D2) -- cycle;

\draw[purple, thick, dashed] (E) -- (E1);
\draw[purple, thick, dashed] (E) -- (E2);
\fill[purple, opacity=0.2] (E) -- (E1) -- (E2) -- cycle;

\node[anchor=center, text=red, font=\large] at (0.7, 0) {$\Lambda(x_1)$};
\node[anchor=center, text=blue, font=\large] at (2.1, 1.4) {$\Lambda(x_2)$};
\node[anchor=center, text=green, font=\large] at (3.5, 0.8) {$\Lambda(x_3)$};
\node[anchor=center, text=orange, font=\large] at (2.5, -1) {$\Lambda(x_4)$};
\node[anchor=center, text=purple, font=\large] at (1.3,-1.5) {$\Lambda(x_5)$};

\end{tikzpicture}}
        \end{minipage}\hfill
    \begin{minipage}{0.33\textwidth} 
        \centering
        \scalebox{0.6}{        \begin{tikzpicture}[scale=1.5]

\coordinate (O) at (0,0);
\coordinate (A1) at (0.707*2.3,-0.707*2.3);
\coordinate (A2) at (0.894*2.3,0.447*2.3);

\coordinate (B1) at ( -0.447*2.3,-0.894*2.3);
\coordinate (B2) at (A1);

\coordinate (C1) at (B1);
\coordinate (C2) at ( -0.875*2.3, 0.25*2.3);

\coordinate (D1) at (C2);
\coordinate (D2) at ( -0.244*2.3,0.970*2.3);

\coordinate (E1) at (A2);
\coordinate (E2) at (D2);



\draw[ thick, dashed] (O) -- (A1);
\draw[ thick, dashed] (O) -- (A2);
\fill[red, opacity=0.2] (O) -- (A1) -- (A2) -- cycle;

\draw[ thick, dashed] (O) -- (B1);
\draw[ thick, dashed] (O) -- (B2);
\fill[blue, opacity=0.2] (O) -- (B1) -- (B2) -- cycle;

\draw[thick, dashed] (O) -- (C1);
\draw[ thick, dashed] (O) -- (C2);
\fill[green, opacity=0.2] (O) -- (C1) -- (C2) -- cycle;

\draw[ thick, dashed] (O) -- (D1);
\draw[ thick, dashed] (O) -- (D2);
\fill[orange,opacity=0.2] (O) -- (D1) -- (D2) -- cycle;

\draw[ thick, dashed] (O) -- (E1);
\draw[ thick, dashed] (O) -- (E2);
\fill[purple, opacity=0.2] (O) -- (E1) -- (E2) -- cycle;

\node[anchor=center, text=red, font=\large] at (1.3, 0) {$\Lambda(x_1)$};
\node[anchor=center, text=blue, font=\large] at (0.3, -1) {$\Lambda(x_2)$};
\node[anchor=center, text=green, font=\large] at (-1, -0.5) {$\Lambda(x_3)$};
\node[anchor=center, text=orange, font=\large] at (-1, 1) {$\Lambda(x_4)$};
\node[anchor=center, text=purple, font=\large] at (0.5,1) {$\Lambda(x_5)$};
\end{tikzpicture}}
        \end{minipage}
        \begin{minipage}{0.33\textwidth} 
        \centering
        \scalebox{0.6}{\begin{tikzpicture}[overlay,scale=1.5]

\coordinate (O) at (0,0);
\coordinate (A1) at (0.707*2.3,-0.707*2.3);
\coordinate (A2) at (0.894*2.3,0.447*2.3);

\coordinate (B1) at ( -0.447*2.3,-0.894*2.3);
\coordinate (B2) at (A1);

\coordinate (C1) at (B1);
\coordinate (C2) at ( -0.875*2.3, 0.25*2.3);

\coordinate (D1) at (C2);
\coordinate (D2) at ( -0.244*2.3,0.970*2.3);

\coordinate (E1) at (A2);
\coordinate (E2) at (D2);



\draw[ thick, dashed] (O) -- (A1);
\draw[ thick, dashed] (O) -- (A2);
\fill[red, opacity=0.2] (O) -- (A1) -- (A2) -- cycle;

\draw[ thick, dashed] (O) -- (B1);
\draw[ thick, dashed] (O) -- (B2);
\fill[blue, opacity=0.2] (O) -- (B1) -- (B2) -- cycle;

\draw[thick, dashed] (O) -- (C1);
\draw[ thick, dashed] (O) -- (C2);
\fill[green, opacity=0.2] (O) -- (C1) -- (C2) -- cycle;

\draw[ thick, dashed] (O) -- (D1);
\draw[ thick, dashed] (O) -- (D2);
\fill[orange,opacity=0.2] (O) -- (D1) -- (D2) -- cycle;

\draw[ thick, dashed] (O) -- (E1);
\draw[ thick, dashed] (O) -- (E2);
\fill[purple, opacity=0.2] (O) -- (E1) -- (E2) -- cycle;

\node[anchor=center, font=\Huge] at (0,-1) {$\cC$};
\node[anchor=center, font=\Huge] at (0.3,1) {$\cC'$};

\coordinate (O1) at ( -1,-0.25);
\coordinate (O2) at (2,-0.1);
\coordinate (O3) at (0,-3.8);
\coordinate (O4) at (-1,-1.6);

\coordinate (V1) at ( 1.2,0);
\coordinate (V2) at (-2,0.1);
\coordinate (V3) at (-0,3.8);
\coordinate (V4) at (1,1.6);
\draw[thick] 
    (V1) .. controls (V2) and (V3) ..  (V1) ;

\draw[thick] 
    (O1) .. controls (O2) and (O3) ..  (O1) ;

\draw[ thick, ->] ( -0.447*1.5,-0.894*1.5 ) -- ( -0.447*1.5 -0.894 ,-0.894*1.5 +0.447 );

\draw[ thick, ->] (0.894*1.5,0.447*1.5) -- (0.894*1.5 - 0.447,0.447*1.5 + 0.894);
\draw[ thick, ->] ( -0.244*1.8,0.970*1.8)-- ( -0.244*1.8 +0.970,0.970*1.8 + 0.244);
\node[anchor=center, font=\large] at (-1,-2.3) {$F(x_3,\delta)$};
\node[anchor=center, font=\large] at (-1.1,-1.4) {$\delta$};
\node[anchor=center, font=\large] at (1.3,1.2) {$\delta'$};
\node[anchor=center, font=\large] at (0,2) {$\delta''$};

\node[anchor=center, font=\large] at (-0.5,2.5) {$F(x_5,\delta'')$};
\node[anchor=center, font=\large] at (2.5,1.3) {$F(x_5,\delta')$};

\end{tikzpicture}}
        \end{minipage}\hfill
  \caption{Optimality cones relative to $\cX$ (left), relative to the origin (middle) and examples of the uncertainty sets ($\cC$ and $\cC'$) relative to the optimality cones (right).}
    \label{fig: optimality cones}
\end{figure}
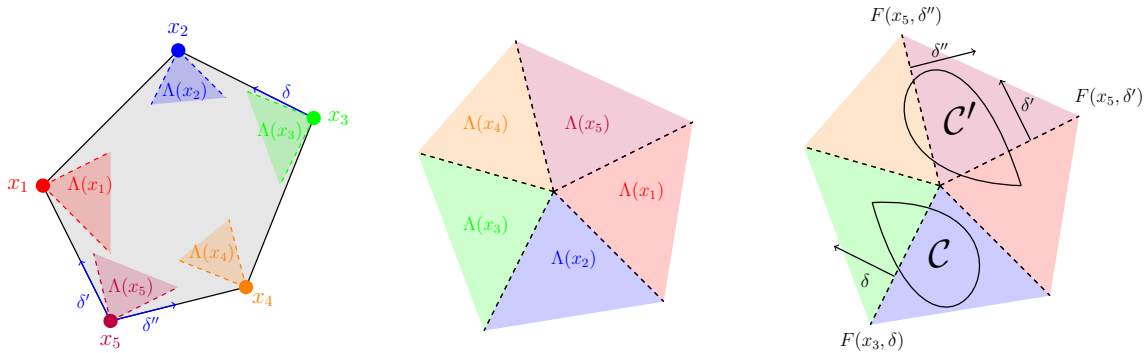

With the notion of optimality cones, solving a linear program for a given cost vector $c$ amounts to finding to which optimality cone it belongs. A dataset is therefore sufficient if it enables to determine the optimality cone of each possible \(c \in \mathcal{C}\). As $\cC$ already restricts the location of its cost vectors, our data only needs to discriminate between cones overlapping with $\cC$ as illustrated in \cref{fig: optimality cones} (right).


To provide further intuition, consider the example shown in \cref{fig: optimality cones} (right). The set $\cC$ intersects only the cones $\Lambda(x_2)$ (blue) and $\Lambda(x_3)$ (green), hence, the cost vectors can only be in these two cones. Clearly, observing their projection on the span of $\delta$ is sufficient to determine which of the two cones they belong to. The set $\cC'$, however, intersecting $\Lambda(x_4),\Lambda(x_5)$ and $\Lambda(x_1)$, requires projections on the span of both $\delta'$ and $\delta''$. These vectors are not arbitrary; these are extreme directions that move from one cone to its adjacent cone, inducing a face where both cones intersect. The illustration highlights that such vectors are necessary to capture by our data when the face they induce intersects the uncertainty set $\cC$. 
Hence, it is natural to introduce the following set of \textit{relevant extreme directions}.
\begin{definition}[Relevant Extreme Directions]
    Given $\cC\subset \R^d$, we define 
   \begin{align*}
        \Delta(\cX,\cC)=\{\delta \in \R^d \; : \; \exists x^\star\in \cX^\angle,\; \delta \in D(x^\star) \text{ and }F(x^\star,\delta)\cap \cC\neq \varnothing\}.
    \end{align*}
\end{definition}
In the illustration of \cref{fig: optimality cones}, we have $\Vect \Delta(\cX,\cC)=\Vect \{\delta\}$ and $\Vect \Delta(\cX,\cC')=\Vect \{\delta',\delta''\}$, and it is necessary to observe the projections on $\Delta(\cX,\cC)$ and $\Delta(\cX,\cC')$ to recover optimal solutions for uncertainty sets $\cC$ and $\cC'$ respectively. This leads to our first main theorem.
\begin{theorem} \label{thm: relatively open characterization}
             Let $\cC$ be an open convex set. $\dataset$ is a \sufficient \;for uncertainty set $\cC$ and decision set $\cX$ if and only if $\Delta(\cX,\cC) \subset \Vect \mathcal D$.
        \end{theorem}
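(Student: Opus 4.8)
\medskip

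The plan is to prove both directions using the optimality-cone picture, via the reformulation in \cref{prop:sufficient:projections}: $\cD$ is sufficient iff any two cost vectors $c,c'\in\cC$ with $c_{\Vect\cD}=c'_{\Vect\cD}$ have the same optimal face. Write $F=\Vect\cD$; note $c_F=c'_F$ means $c-c'\perp F$, i.e.\ $c-c'\in F^\perp$. So sufficiency is equivalent to: for all $c,c'\in\cC$, $c-c'\in F^\perp \implies \argmin_{x\in\cX}c^\top x=\argmin_{x\in\cX}c'^\top x$.

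\medskip

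\textbf{Sufficiency ($\Delta(\cX,\cC)\subset F \implies \cD$ sufficient).} Suppose $c,c'\in\cC$ with $c-c'\in F^\perp$, hence $c-c'\perp\delta$ for every $\delta\in\Delta(\cX,\cC)$. I want to show they have the same optimal solution set. The idea is a connectedness/path argument on the segment $[c,c']\subset\cC$ (using convexity of $\cC$). Partition $[c,c']$ according to which optimality cone $\Lambda(x^\star)$ each point lies in. Since $\R^d=\bigcup_{x^\star\in\cX^\angle}\Lambda(x^\star)$ and there are finitely many extreme points, the segment is covered by finitely many closed cones; consecutive ones meet along a face. At a crossing point the cost vector $c_0$ lies on $F(x^\star,\delta)=\Lambda(x^\star)\cap\{\delta\}^\perp$ for some extreme direction $\delta\in D(x^\star)$, and since $c_0\in\cC$ this face meets $\cC$, so $\delta\in\Delta(\cX,\cC)$, hence $\delta\perp(c-c')$, i.e.\ $\delta\perp$ the segment direction. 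But $\delta^\top c_0=0$ on the face and the segment moves orthogonally to $\delta$, so the whole segment stays in $\{\delta\}^\perp$ near $c_0$; combined with staying in the closed cone on one side, one shows the crossing cannot actually leave $\Lambda(x^\star)$ transversally through that facet. Making this rigorous is the crux: I expect the cleanest route is to argue that if $c$ and $c'$ had different optimal \emph{extreme points}, then somewhere on the segment there is a first point $c_0$ leaving the optimal cone of $c$, $c_0$ lies on a face $F(x^\star,\delta)$ with $\delta\in\Delta(\cX,\cC)$, and the relation $\delta^\top c=\delta^\top c'$ forces $\delta^\top c_0=\delta^\top c=\delta^\top c'$; tracking signs of $\delta^\top(\cdot)$ along the segment then yields a contradiction with the cone inequalities $c\in\Lambda(x^\star)\iff \delta^\top c\ge0$ for all $\delta\in D(x^\star)$. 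Once $c,c'$ share an optimal extreme point, a short additional argument (again using that all relevant separating directions are orthogonal to $c-c'$) upgrades this to equality of the full optimal faces, as in \cref{prop:equivalence-argmin}.

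\medskip

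\textbf{Necessity ($\cD$ sufficient $\implies \Delta(\cX,\cC)\subset F$).} I prove the contrapositive: suppose some $\delta\in\Delta(\cX,\cC)$ with $\delta\notin F=\Vect\cD$. By definition there is $x^\star\in\cX^\angle$, $\delta\in D(x^\star)$, and $c_0\in F(x^\star,\delta)\cap\cC$. Since $\cC$ is open, a small ball around $c_0$ lies in $\cC$. Since $\delta\notin F$, the projection $\delta_{F^\perp}\neq 0$; consider $c^\pm=c_0\pm t\,\delta_{F^\perp}$ for small $t>0$. Then $c^+_F=c^-_F=c_{0,F}$ (same observations), both lie in $\cC$, but $\delta^\top c^+ = \delta^\top c_0 + t\|\delta_{F^\perp}\|^2>0$ while $\delta^\top c^- <0$. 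Because $c_0$ is in the relative interior of the facet $F(x^\star,\delta)$ of $\Lambda(x^\star)$ (here I need that $F(x^\star,\delta)$ is nonempty and that perturbing into $\cC$ can be done while staying near that facet — choosing $c_0$ a relative-interior point of the intersection, or perturbing along $\pm\delta$ rather than $\pm\delta_{F^\perp}$ and then correcting, handles this), $c^+$ lies in $\Int\Lambda(x^\star)$ (so $x^\star$ is its unique optimum) while $c^-$ crosses into the neighboring cone, where $x^\star$ is \emph{not} optimal. Thus two cost vectors with identical $\cD$-observations have different optimal sets, contradicting \cref{prop:sufficient:projections}.

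\medskip

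The main obstacle is the sufficiency direction: controlling what happens as the segment $[c,c']$ passes through \emph{lower-dimensional} faces (intersections of several cones), where several extreme directions are simultaneously active. The fix is to note that every such direction lies in $\Delta(\cX,\cC)$ (the relevant face meets $\cC$ at that crossing point), hence is orthogonal to $c-c'$, so the segment is confined to the common orthogonal complement of all locally-active directions; a careful finite induction on the faces crossed, or a perturbation of the segment to generic position meeting only facets, then closes the argument. A secondary technical point, used in both directions, is the precise relationship $c_F=c'_F \iff c-c'\in F^\perp$ together with $\Vect\cD=F$, which is immediate, and the standing assumption that $\cC$ is open (needed for the perturbation room in necessity and for interior points on the segment in sufficiency).
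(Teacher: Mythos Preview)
Your plan matches the paper's proof: perturbation along $F^\perp$ for necessity, and a first-exit argument on the segment $[c,c']$ for sufficiency (the paper phrases the latter as a contrapositive, but it is the same computation). Two simplifications remove the obstacles you flag.

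In necessity, you do not need $c^+\in\Int\Lambda(x^\star)$, uniqueness of the optimum, or any relative-interior argument: the pair $(c_0,c^-)$ already witnesses non-sufficiency, since $c_0\in F(x^\star,\delta)\subset\Lambda(x^\star)$ gives $x^\star\in\argmin_{x\in\cX}c_0^\top x$, while $\delta^\top c^-<0$ forces $c^-\notin\Lambda(x^\star)$ so $x^\star\notin\argmin_{x\in\cX}(c^-)^\top x$, and $(c_0)_F=(c^-)_F$ because $c_0-c^-=t\,\delta_{F^\perp}\in F^\perp$.

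In sufficiency, the lower-dimensional-face worry dissolves with one clean choice: having set $\alpha^\star=\sup\{\alpha:c_\alpha\in\Lambda(x^\star)\}$, pick $\varepsilon>0$ small enough that every $\delta\in D(x^\star)$ with $c_{\alpha^\star}^\top\delta>0$ still satisfies $c_{\alpha^\star+\varepsilon}^\top\delta>0$ (finitely many strict inequalities). Since $c_{\alpha^\star+\varepsilon}\notin\Lambda(x^\star)$, some $\delta\in D(x^\star)$ has $c_{\alpha^\star+\varepsilon}^\top\delta<0$, and by construction that $\delta$ must have $c_{\alpha^\star}^\top\delta=0$, i.e.\ $c_{\alpha^\star}\in F(x^\star,\delta)\cap\cC$, hence $\delta\in\Delta(\cX,\cC)$; but $(c'-c)^\top\delta=\varepsilon^{-1}(c_{\alpha^\star+\varepsilon}-c_{\alpha^\star})^\top\delta<0$ shows $\delta\not\perp F^\perp$, so $\delta\notin F$. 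No genericity or perturbation of the segment is needed.
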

         \begin{proof}
     We denote $F=\Vect \mathcal D$. Notice that we have $\Delta(\cX,\cC) \subset F \Longleftrightarrow \Delta(\cX,\cC) \perp  F^\perp.$ We will now prove that  $\dataset$ is a \sufficient \;for $\cC$ if and only if $ \Delta(\cX,\cC) \perp  F^\perp$.
    \begin{itemize}
        \item $(\Leftarrow)$ Suppose $\Delta(\cX,\cC) \not \perp  \cap F^\perp$. There exists $\delta \in \Delta(\cX,\cC)$ such that $\delta \not\perp  F^\perp$. By definition, there exists $x\in \cX^\angle$ such that $\delta \in D(x)$ and $F(x,\delta) \cap \cC \neq \varnothing$. Let $v\in F(x,\delta) \cap \cC$. Let $\delta_0\in  F^\perp$ such that $\delta_0 ^\top \delta <0$ ($\delta_0$ exists because $\delta \not \perp   F^\perp$). As $\cC$ is open, we can assume without loss of generality that $v+\delta_0\in \cC$ by rescaling $\delta_0$. We know that $v\in F(x,\delta)\subset \Lambda(x)$, and $(v+\delta_0)^\top \delta = \delta_0^\top \delta <0$ which implies that $v+\delta_0\not\in \Lambda(x)$. Finally, since we have $\delta_0 \in F^\perp$, we have $(v+\delta_0)_F=v_F+\delta_{0,F}=v_F$. However, 
        $v \in \Lambda(x)$ and $v+\delta \not \in \Lambda(x)$ which implies $x\in \arg\min_{x'\in \cX}v^\top x'$ and $x\not\in\arg\min_{x'\in \cX} (v+\delta_0)^\top x'$, meaning that $\arg\min_{x'\in \cX} (v+\delta_0)^\top x'\neq \arg\min_{x'\in \cX} v^\top x'$. This
        implies that $\cD$ is not a \sufficient{}
   from \cref{prop:sufficient:projections}.

        \item $(\Rightarrow)$ Suppose $\cD$ is not \suff. From \cref{prop:sufficient:projections}, there exists $c,c'\in \cC$ such that $c_F=c'_F$ and $\arg\min_{x\in \cX}c^\top x \neq \arg\min_{x\in \cX}c'^\top x $. It follows from the definition of the optimality cones $\Lambda$ (\cref{optimality cone def}) that there exists $x\in \cX^\angle$ such that $c\in \Lambda (x)$ and $c'\not\in \Lambda (x)$ (see also \cref{cone equivalence}). For any $\alpha\in[0,1]$, we denote $c_\alpha:=(1-\alpha) c + \alpha c'$
        \begin{align*}
        \alpha^\star :&=\sup\left\{\alpha \in [0,1]\; : \; c_\alpha \in \Lambda(x) \right\} \\
        &= \sup\left\{\alpha \in [0,1]\; : \; c_\alpha^\top\delta \geq 0, \; \forall \delta \in D(x) \right\}.
        \end{align*}
        
        Since $\cC$ is convex, we have $c_{\alpha^\star} \in \cC$.  Since $\Lambda(x)$ is a closed set, we have $c_{\alpha^\star} \in \Lambda(x)$ and hence we have $c_{\alpha^\star} \neq c'$ i.e. $\alpha^\star<1$. Let $\varepsilon\in(0,1-\alpha^\star)$ small enough such that for any $\delta \in D(x)$ such that ${c_{\alpha^\star}}^\top \delta>0$, we have $c_{\alpha^\star+\varepsilon}^\top \delta >0$. As $c_{\alpha^\star + \epsilon} \not \in \Lambda(x)$, there exists $\delta \in D(x)$ for which ${c_{\alpha^\star + \epsilon}}^\top \delta<0$. Such $\delta$ must verify ${c_{\alpha^\star}}^\top \delta = 0$ given the condition defining $\epsilon$. Hence, $c_{\alpha^\star} \in \Lambda(x) \cap \{\delta\}^\perp = F(x,\delta)$, and $c_{\alpha^\star} \in \cC$, which implies $F(x,\delta) \cap \cC \neq \varnothing$ and therefore $\delta \in \Delta(\cX,\cC)$. Moreover, we have $\underbrace{(c_{\alpha^\star+\varepsilon}-c_{\alpha^\star})^\top}_{=\varepsilon(c'-c)\in  F^\perp} \delta =  c_{\alpha^\star+\varepsilon}^\top \delta \neq 0$, i.e. $\delta \not \perp  F^\perp$, and consequently we have $\Delta(\cX,\cC) \not\perp F^\perp$.
        \end{itemize}
        \end{proof}

        \cref{thm: relatively open characterization} is a fundamental characterization of sufficiency, by what information the dataset needs to capture relative to the prior knowledge $\cC$ and the problem structure $\cX$. The result also indicates that such a minimal dataset is, in general, not unique. A careful reader might remark that \cref{thm: relatively open characterization} should imply \cref{prop:suff:vectorspace} when $\cC = \Re^d$. In fact, $\Delta(\cX,\Re^d)$ is the set of all extreme directions of the polyhedron, which indeed precisely spans $\Ker A \cap F_0$.
        
         \begin{remark}
         In Theorem \ref{thm: relatively open characterization}, only convexity is needed for the condition $ \Delta(\cX,\cC) \subset\Vect \mathcal D$ to be sufficient, and only openness is needed for the condition to be necessary. In fact, a more general formulation of \cref{thm: relatively open characterization} would be: (1) If $\cC$ is open then: $\cD$ is \suff\ $\implies$ $\Delta(\cX,\cC) \subset  \Vect \mathcal D$, (2) If $\cC$ is convex then: $\Delta(\cX,\cC) \subset  \Vect \mathcal D$ $\implies$ $\cD$ is \suff.
         \end{remark}

    \subsection{An Algorithmically Tractable Characterization}
    We now develop a second characterization of dataset sufficiency that is particularly well-suited to algorithmic construction.

   The set $\Delta(\cX,\cC)$ of relevant extreme directions of \cref{thm: relatively open characterization} can be seen intuitively as the set of differences $x_1-x_2$ of \textit{neighboring} extreme points $x_1,x_2\in \cX^\angle$, that are optimal for some $c \in \cC$. By relaxing the ``neighboring’’ condition and optimality for a common $c\in \cC$, we arrive at a broader set of directions induced by all pairs of optimal extreme points---which we call \textit{reachable solutions}. This alternative perspective replaces the notion of relevant extreme directions with that of reachable solutions, leading to a representation of sufficiency in terms of differences between optimal solutions under cost vectors in \(\mathcal{C}\).

\begin{definition}[Reachable Solutions]
    Given $\cC\subset \R^d$, we define 
    \begin{align*}
        \dualpoly{\cX}{\cC}:=\left\{x^\star\in \cX^\angle,\;\exists c \in \cC,\; x^\star\in \arg\min_{x\in \cX}c^\top x \right\}=\bigcup_{c\in \cC}^{}\arg\min_{x\in \cX}c^\top x.
    \end{align*} 
and its set of directions as $\dir{\dualpoly{\cX}{\cC}} := \Vect\{x_1 - x_2 \; : \; x_1,x_2 \in  \dualpoly{\cX}{\cC}\}$.
\end{definition}

The set $\dir{\dualpoly{\cX}{\cC}}$ is equal to the span of the set of differences between \textit{any} two elements $x_1,x_2\in \cX$ such that there exists $c_1,c_2 \in \cC$ such that $x_1\in \arg\min_{x\in \cX}c_1^\top x$ and $x_2\in \arg\min_{x\in \cX}c_2^\top x$. By construction, we have $\Vect \Delta(\cX,\cC) \subset \dir{\dualpoly{\cX}{\cC}}$ since each relevant extreme direction corresponds to a direction between optimal solutions. The following theorem shows that these quantities are indeed equal.

\begin{theorem}\label{thm:span delta is dir x}
    For any convex set $\cC\subset \R^d$, we have $\Vect \Delta(\cX,\cC)=\dir{\dualpoly{\cX}{\cC}}$.
\end{theorem}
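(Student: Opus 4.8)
The inclusion $\Vect\Delta(\cX,\cC)\subset\dir{\dualpoly{\cX}{\cC}}$ is the one already recorded above: if $\delta\in D(x^\star)$ and $v\in F(x^\star,\delta)\cap\cC$, then $x^\star$ and the neighbouring vertex $x^{\star\star}$ obtained by moving from $x^\star$ along the extreme ray $\delta$ (well defined since $\cX$ is bounded) are both optimal for $v$---the second because $v^\top\delta=0$---so $x^{\star\star}-x^\star\in\dir{\dualpoly{\cX}{\cC}}$ and hence $\delta$ lies in the subspace $\dir{\dualpoly{\cX}{\cC}}$. The real work is the reverse inclusion $\dir{\dualpoly{\cX}{\cC}}\subset\Vect\Delta(\cX,\cC)$; for this it suffices to show $x_1-x_2\in\Vect\Delta(\cX,\cC)$ for any $x_1,x_2\in\dualpoly{\cX}{\cC}$, say with $x_1\in\arg\min_{x\in\cX}c_1^\top x$ and $x_2\in\arg\min_{x\in\cX}c_2^\top x$ for some $c_1,c_2\in\cC$.

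The plan is a parametric-LP argument along the segment $c(t)=(1-t)c_1+tc_2$, which lies in $\cC$ by convexity---the only place convexity of $\cC$ is used (openness is not needed, matching the statement). For $t\in[0,1]$ let $G(t)=\arg\min_{x\in\cX}c(t)^\top x$, a face of $\cX$. For each vertex $v\in\cX^\angle$ the set $\{t : c(t)\in\Lambda(v)\}$ is a closed subinterval of $[0,1]$ (intersection of a line with the convex set $\Lambda(v)$); collecting the finitely many endpoints of these intervals together with $0$ and $1$ gives breakpoints $0=t_0<t_1<\dots<t_m=1$ such that on each open interval $(t_{j-1},t_j)$ the set of optimal vertices, hence the face $G(t)$, equals a constant polytope $H_j$. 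At each interior breakpoint $t_j$ both $H_j$ and $H_{j+1}$ are faces of $G(t_j)$, since optimal vertices near $t_j$ from either side remain optimal at $t_j$ and a face of $\cX$ contained in another face of $\cX$ is a face of it; likewise $H_1$ is a face of $G(0)$ and $x_1$ is a vertex of $G(0)$, and $H_m$ is a face of $G(1)$ with $x_2$ a vertex of $G(1)$. Choosing any vertex $w_j$ of $H_j$ for each $j$, we then telescope
$x_1-x_2=(x_1-w_1)+\sum_{j=1}^{m-1}(w_j-w_{j+1})+(w_m-x_2)$,
where each consecutive pair of points lies in a common face of $\cX$, namely $G(0)$, $G(t_j)$, and $G(1)$ respectively.

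It remains to prove the key claim: if $G=\arg\min_{x\in\cX}c^\top x$ for some $c\in\cC$, then $u-u'\in\Vect\Delta(\cX,\cC)$ for any two vertices $u,u'$ of $G$. By connectedness of the $1$-skeleton of the polytope $G$ it is enough to treat $u,u'$ joined by an edge of $G$; since a face of a face is a face, this edge is an edge of $\cX$, so $u'-u$ is proportional to some $\delta\in D(u)$. Now $c\in\Lambda(u)$ because $u$ is optimal for $c$, and $c^\top(u'-u)=0$ because $u'$ is also optimal for $c$, so $c\in\Lambda(u)\cap\{\delta\}^\perp=F(u,\delta)$; as $c\in\cC$ this gives $\delta\in\Delta(\cX,\cC)$, hence $u'-u\in\Vect\Delta(\cX,\cC)$. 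Applying this to the faces $G(0),G(t_1),\dots,G(t_{m-1}),G(1)$ and summing in the telescoping identity finishes the proof.

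I expect the main obstacle to be the bookkeeping around degeneracy: when the segment $[c_1,c_2]$ runs inside the common boundary of several optimality cones, the optimal solution need not reduce to a single vertex on the open subintervals, and one must argue carefully that consecutive optimal faces $H_j,H_{j+1}$ both sit as faces inside $G(t_j)$, so the telescoping through representative vertices is legitimate. The supporting polytopal facts invoked---vertices of a face are vertices of the polytope, edges of a face are edges of the polytope, and the $1$-skeleton of a polytope is connected---are standard but should be stated explicitly since the whole argument rests on them.
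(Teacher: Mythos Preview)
Your argument is correct and is essentially the same as the paper's: both trace the segment $c(t)=(1-t)c_1+tc_2\subset\cC$, use that within any optimal face the vertices are joined by edges of $\cX$ (each such edge yielding an element of $\Delta(\cX,\cC)$), and glue across the segment via the overlap of optimal faces at the breakpoints. The paper packages the identical content as a lemma that $\dualpoly{\cX}{\cC}\cap\cX^\angle$ is ``$\cC$-strongly connected'' (proved by a supremum/contradiction step rather than your explicit parametric-LP breakpoints) together with a separate reduction from $\dualpoly{\cX}{\cC}$ to its extreme points; your telescoping is the direct form of that same proof.
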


The converse inclusion proven in this theorem is not immediate. In fact, for a general polyhedron $\cX$ and $\cC$ (see \cref{fig: optimality cones} with $\cC'$ for eg.), $\Delta(\cX,\cC)$ is much smaller than the set of differences of elements in $\dualpoly{\cX}{\cC}$ but their spans are equal. To prove the equality, we prove that for any $x,x'\in \dualpoly{\cX}{\cC}\cap \cX^\angle$, there exists a sequence of extreme points $x_1,\dots,x_h \in \dualpoly{\cX}{\cC}$ such that $x_1=x$ and $x_h=x'$ and for any $i\in [h-1]$, $x_{i+1}-x_i\in \Delta(\cX,\cC)$. In other words, $x_{i+1},x_i$ are neighbors and are both optimal for some $c_i\in \cC$. This implies that every element in \(\dir{\dualpoly{\mathcal{X}}{\mathcal{C}}}\) can be written as a finite linear combination of elements in \(\Delta(\mathcal{X}, \mathcal{C})\), completing the equality. Relating again to \cref{prop:suff:vectorspace} of the case $\cC = \Re^d$, careful linear algebra shows that indeed $\dir{\cX^\star(\Re^d)} = \dir{\cX} = \Ker A \cap F_0$.

\cref{thm: relatively open characterization} implies that to construct a \sufficient\, it suffices to find
a basis of $\dir{\dualpoly{\cX}{\cC}}$ rather than $\Vect \Delta(\cX,\cC)$, which is a much simpler task. The following corollary will indeed be the basis of our algorithm in the next section.

\begin{corollary}\label{cor:dualpoly-carac}
    Let $\cC$ be an open convex set. $\dataset$ is a \sufficient \;for $\cC$ if and only if $\dir{\dualpoly{\cX}{\cC}} \subset \Vect \mathcal D$.
\end{corollary}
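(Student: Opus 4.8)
The plan is to obtain the corollary as a direct consequence of the two preceding theorems, with only a trivial linear-algebra observation needed to glue them together. First I would record that, since $\Vect\mathcal D$ is a linear subspace of $\R^d$ and $\Vect\Delta(\cX,\cC)$ is by definition the smallest subspace containing $\Delta(\cX,\cC)$, the set inclusion $\Delta(\cX,\cC)\subset\Vect\mathcal D$ is equivalent to the subspace inclusion $\Vect\Delta(\cX,\cC)\subset\Vect\mathcal D$. (One direction is immediate from $\Delta(\cX,\cC)\subset\Vect\Delta(\cX,\cC)$; the other follows because any linear combination of elements of $\Delta(\cX,\cC)$ lies in $\Vect\mathcal D$ once each generator does.)

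Next I would invoke the hypotheses: $\cC$ is open and convex, so \cref{thm: relatively open characterization} applies and gives that $\mathcal D$ is a \sufficient{} for $\cC$ if and only if $\Delta(\cX,\cC)\subset\Vect\mathcal D$. Since $\cC$ is in particular convex, \cref{thm:span delta is dir x} applies and gives $\Vect\Delta(\cX,\cC)=\dir{\dualpoly{\cX}{\cC}}$. Chaining these: $\mathcal D$ is \suff{} $\iff \Delta(\cX,\cC)\subset\Vect\mathcal D \iff \Vect\Delta(\cX,\cC)\subset\Vect\mathcal D \iff \dir{\dualpoly{\cX}{\cC}}\subset\Vect\mathcal D$, which is exactly the claim.

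There is essentially no obstacle here: the content of the corollary is entirely carried by \cref{thm: relatively open characterization} and \cref{thm:span delta is dir x}, and the only thing to verify carefully is the harmless equivalence between the set inclusion $\Delta(\cX,\cC)\subset\Vect\mathcal D$ appearing in \cref{thm: relatively open characterization} and the span inclusion $\dir{\dualpoly{\cX}{\cC}}\subset\Vect\mathcal D$ appearing in the statement. I would keep the write-up to a couple of sentences and make explicit that both openness (for necessity) and convexity (for sufficiency and for \cref{thm:span delta is dir x}) of $\cC$ are used.
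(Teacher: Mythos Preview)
Your proposal is correct and is exactly the intended approach: the paper presents this as an immediate corollary of \cref{thm: relatively open characterization} and \cref{thm:span delta is dir x}, and the only glue needed is the trivial observation that $\Delta(\cX,\cC)\subset\Vect\mathcal D$ iff $\Vect\Delta(\cX,\cC)\subset\Vect\mathcal D$.
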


\section{A Data Collection Algorithm: Finding Minimal Sufficient Datasets} \label{sec:algorithm}

We now turn to the practical problem of selecting a minimal---i.e., smallest or least costly---dataset $\cD$ that enables generalization from prior contextual knowledge (captured by \(c \in \mathcal{C}\)) to a specific decision-making task (defined by \(\mathcal{X}\)). 

In many practical settings, data collection is subject to constraints on what can be queried. We model this by restricting the dataset to lie in a predefined query set \(\mathcal{Q} \subset \mathbb{R}^d\), so that \(\mathcal{D} \subseteq \mathcal{Q}\). For example, in the hiring problem discussed in \cref{sec:intro}, $\cQ$ is the set of canonical basis vectors---a data point is interviewing one candidate. \cref{cor:dualpoly-carac} implies then that the data collection problem becomes: finding the smallest $\cD \subset \cQ$ verifying $\dir{\dualpoly{\cX}{\cC}} \subset \Vect \cD$.

We will focus in what follows on the important case of $\cQ$ being the set of canonical basis vectors. That is, each query in the data consists in evaluating one coordinate of unknown cost vector $c$, which represents the score of some candidate. In this case, given $\dir{\dualpoly{\cX}{\cC}}$, represented by a basis $v_1,\ldots,v_k$, it is clear that the smallest sufficient data set, verifying the spanning condition of \cref{cor:dualpoly-carac}, is $\cD = \{ e_i \; : \; i \in [d], \; \exists j \in [k], \; v_j^\top e_i \neq 0 \}$. This is all the non-zero coordinates of basis vectors of $\dir{\dualpoly{\cX}{\cC}}$, which are required to span $\dir{\dualpoly{\cX}{\cC}}$. This case can be generalized in a straightforward manner to the case where $\cQ$ is any basis of $\Re^d$; see \cref{alg:data collection algorithm}.

The central step in this approach is therefore to compute $\dir{\dualpoly{\cX}{\cC}}$ and construct a basis for it, which is the focus of the remainder of this section.
We can write 
\(
    \dir{\dualpoly{\cX}{\cC}}=\Vect \{x_0 - x,\; x\in \dualpoly{\cX}{\cC}\}
\)
for some $x_0 \in \dualpoly{\cX}{\cC}$. Hence, to compute $\dir{\dualpoly{\cX}{\cC}}$, we can iteratively add elements of it while ensuring we increase the dimension at every step. This is formalized in Algorithm \ref{alg:meta_alg_dir}.

\begin{algorithm}[h]
\caption{Meta-Algorithm Computing $\dir{\dualpoly{\cX}{\cC}}$}
\label{alg:meta_alg_dir}

    \KwIn{Decision set $\cX$, Uncertainty set $\cC$.}
    \KwOut{A basis $\cD\subset \R^d$ of $\dir{\dualpoly{\cX}{\cC}}$.}
    Initialize $\cD$ to $\varnothing$.
    
    Set $x_0\in \arg\min_{x\in \cX}c_0^\top x$ for some $c_0 \in \cC$.
    
    \textbf{while} there exists $c\in \cC$, $x^\star \in \argmin_{x\in\cX} c^\top x$ such that $x^\star-x_0 \not \in \Vect{\cD}$.

    \quad $\cD \leftarrow \cD \cup \{x^\star-x_0\}$.
    
    \textbf{return} $\cD$
\end{algorithm}

The main step in Algorithm \ref{alg:meta_alg_dir} (condition of the while loop) can be seen as verifying whether the optimization problem
\begin{equation}\label{eq:alg_optimization_pb}
    \sup \{\ \|\proj{(\Vect \cD)^\perp}{x^\star - x_0}\|  \; : \; c \in \cC, \; x^\star \in \argmin_{x \in \cX} c^\top x\},
\end{equation} 
where $\proj{(\Vect \cD)^\perp}{\cdot}$ is the projection map onto $(\Vect \cD)^\perp$, has a solution with a non-zero objective. This optimization problem has two main challenges: first, it entails the inherently difficult task of maximizing a convex objective, and second, it has a bilinear, bi-level constraint $x^\star \in \argmin_{x \in \cX} c^\top x$ as both $c$ and $x^\star$ are variables and $x^\star$ must be an optimal solution to a linear program parameterized by c.

\textbf{Linearizing the objective.}
Remark that if $\alpha$ is a randomly generated Gaussian vector, then any vector $v$, with $\|v\|>0$, satisfies $\Prob(\alpha^\top v = 0) = 0$.
Hence, if Problem \eqref{eq:alg_optimization_pb} admits a solution $\bar{x}$ verifying $\|\proj{(\Vect \cD)^\perp}{\bar{x} - x_0}\|> 0$, then $\alpha^\top \proj{(\Vect \cD)^\perp}{\bar{x} - x_0} \neq 0$ with probability $1$, and therefore
either maximizing or minimizing $\alpha^\top \proj{(\Vect \cD)^\perp}{x^\star - x_0}$ must lead a non-zero objective with probability $1$. This is a linear objective as the projection onto a subspace is linear.


\textbf{Linearizing the bilinear, bilevel constraint.}
To address the second challenge, we use complementary slackness conditions, which characterize the optimal solutions of linear programs. We replace $x^\star \in \argmin_{x \in \cX} c^\top x, \; c \in \cC$ by
\begin{align*}
    &x^\star \geq 0, \; s \geq 0, \; \lambda \in \Re^m, \; c \in \cC,\\
    &Ax^\star = b, \; A^\top \lambda + s = c, \; x^\star_is_i = 0, \; \forall i \in [d] 
\end{align*}
The bilinear constraint $x^\star_is_i = 0$ can be linearized by introducing a binary variable $\tau_i \in \{0,1\}$ and adding the constraint
$$
1- \epsilon s_i \geq \tau_i \geq \epsilon x^\star_i
$$
with $\epsilon>0$ a small constant. When \(\mathcal{C}\) is a polyhedron, the resulting formulation is a mixed-integer linear program (MILP) with linear constraints and objectives.

Putting everything together gives Algorithm \ref{alg:LP_case} for linear programs. The algorithm will terminate in exactly $\dim \dir{\dualpoly{\cX}{\cC}}$ iterations. When $\cC$ is a polyhedron, each iteration involves solving a mixed integer program with $O(d+m)$ variables and $O(d+m + \mathrm{constr}(\cC))$ constraints where $\mathrm{constr}(\cC)$ is the number of constraints defining $\cC$. 

\begin{algorithm}[h!]
    \caption{Computing $\dir{\dualpoly{\cX}{\cC}}$}
     \label{algorithm to compute delta}
    \KwIn{Polyhedron $\cX = \{x \geq 0 \; : \; Ax=b\}$, Uncertainty set $\cC$.}
    \KwOut{A basis of $\dir{\dualpoly{\cX}{\cC}}$.}
    Initialize $\cD$ to $\varnothing$.
    
    Set $x_0\in \arg\min_{x\in \cX}c_0^\top x$ for some $c_0 \in \cC$.

    Sample $\alpha \sim \mathcal{N}(0,Id)$.
    
    \textbf{while} either of the problems

    \begin{align*}
        \min / \max&\;\alpha^\top \proj{(\Vect \cD)^\perp}{x_0-x}\\
        \text{s.t.}&\;x \geq 0,\; \lambda \in \R^m,\; s\in \R_+^d,\; c\in \cC\\
        &Ax=b, \; A^\top \lambda +s=c, \\
        & 1- \epsilon s_i \geq \tau_i \geq \epsilon x_i, \; \tau_i \in \{0,1\}, \; \forall i
    \end{align*}

    \quad has a solution $x^\star$ with non-zero optimal value,
    
    \quad $\cD \leftarrow \cD \cup \{x^\star - x_0\}$.
    
    \quad resample $\alpha \sim \mathcal{N}(0,Id)$.
    
    \textbf{return} $\cD$
\label{alg:LP_case}
\end{algorithm}

\begin{theorem}[Correctness]\label{thm:alg_termination}
   Algorithm \ref{algorithm to compute delta} terminates with probability $1$ after $\dim \dir{\dualpoly{\cX}{\cC}} \leq d$ steps and outputs a basis of $\dir{\dualpoly{\cX}{\cC}}$.
\end{theorem}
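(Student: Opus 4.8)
The plan is to establish three facts about the \textbf{while} loop of Algorithm~\ref{algorithm to compute delta}: a monotone invariant, a deterministic bound on the number of iterations, and an almost-sure guarantee against premature halting; write $V=\Vect\cD$ for the current span. I would first check that, for $\epsilon>0$ small enough, the mixed-integer feasible set represents exactly the pairs $(c,x^\star)$ with $c\in\cC$ and $x^\star\in\argmin_{x\in\cX}c^\top x$: for any $\epsilon>0$ the constraints $1-\epsilon s_i\ge\tau_i\ge\epsilon x_i$ with $\tau_i\in\{0,1\}$, $x\ge 0$, $s\ge 0$ force $x_i s_i=0$ for each $i$ (if $\tau_i=0$ then $\epsilon x_i\le 0$, so $x_i=0$; if $\tau_i=1$ then $\epsilon s_i\le 0$, so $s_i=0$), so with $Ax^\star=b$, $x^\star\ge 0$ and $A^\top\lambda+s=c$, $s\ge 0$, LP strong duality and complementary slackness give $x^\star\in\argmin_{x\in\cX}c^\top x$ for $c\in\cC$; conversely, for $\epsilon$ below the reciprocal of a uniform bound on the coordinates of primal and (suitably chosen) dual feasible points---a bound that exists because $\cX$ is bounded---each such pair extends to a feasible point with a valid $\tau$. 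Since $x^\star$ lies in the face $\argmin_{x\in\cX}c^\top x$ of $\cX$, whose vertices are vertices of $\cX$ optimal for $c$ and hence belong to $\dualpoly{\cX}{\cC}$, we get $x^\star\in\conv(\dualpoly{\cX}{\cC})$, and likewise $x_0\in\conv(\dualpoly{\cX}{\cC})\subseteq\aff{\dualpoly{\cX}{\cC}}$; since $\dir{\dualpoly{\cX}{\cC}}=\Vect\{x-x_0 : x\in\dualpoly{\cX}{\cC}\}$, every appended vector $x^\star-x_0$ lies in $\dir{\dualpoly{\cX}{\cC}}$, so $V\subseteq\dir{\dualpoly{\cX}{\cC}}$ throughout, and $\cD$ remains linearly independent.

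Next, whenever the loop body executes, the selected $x^\star$ attains a \emph{nonzero} objective value, so $\proj{(\Vect\cD)^\perp}{x_0-x^\star}\ne 0$, i.e.\ $x^\star-x_0\notin V$, and $\dim V$ increases by exactly one. Starting from $\cD=\varnothing$ with $\dim V=0$, and bounded above by $\dim\dir{\dualpoly{\cX}{\cC}}\le d$ by the invariant, the loop runs for at most $\dim\dir{\dualpoly{\cX}{\cC}}$ iterations---deterministically; it remains to show it does not stop before $V=\dir{\dualpoly{\cX}{\cC}}$.

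The loop halts only when \emph{both} the $\min$ and the $\max$ problems have optimal value $0$, i.e.\ $\alpha^\top\proj{(\Vect\cD)^\perp}{x_0-x}=0$ for every feasible $x$. Suppose that, at some termination test, $V\subsetneq\dir{\dualpoly{\cX}{\cC}}$. As $\dir{\dualpoly{\cX}{\cC}}=\Vect\{x-x_0 : x\in\dualpoly{\cX}{\cC}\}$, some $x'\in\dualpoly{\cX}{\cC}$ has $v:=\proj{(\Vect\cD)^\perp}{x'-x_0}\ne 0$; paired with a dual optimal solution (and the $\tau$ it induces, valid for $\epsilon$ small), $x'$ is a feasible point of the program with objective value $-\alpha^\top v$. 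The crux is that the $\alpha$ in this test was (re)sampled \emph{after} $\cD$ reached its current value, hence is independent of $v$; since $v$ is a fixed nonzero vector and $\alpha\sim\cN(0,Id)$, $\Prob(\alpha^\top v=0\mid\cD)=0$, so with conditional probability one some feasible point has nonzero objective, i.e.\ one of the two problems has nonzero optimal value and the loop continues. Conditioning on the algorithm's history just before each fresh resample and union-bounding over the at most $d$ rounds, with probability one the loop never halts while $V\subsetneq\dir{\dualpoly{\cX}{\cC}}$; combined with the first two parts, almost surely the loop stops after exactly $\dim\dir{\dualpoly{\cX}{\cC}}$ iterations with $\cD$ a linearly independent spanning set of $\dir{\dualpoly{\cX}{\cC}}$---a basis. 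I expect this final step to be the main obstacle: the terminal $\cD$ is statistically coupled with every sampled $\alpha$, so one cannot naively invoke ``a Gaussian misses a fixed hyperplane almost surely''; the resolution is exactly the conditioning-on-history argument combined with the deterministic bound on the number of rounds. The encoding step is routine modulo standard care with the big-$M$/complementary-slackness linearization and the choice of $\epsilon$.
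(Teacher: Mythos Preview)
Your proof is correct and follows essentially the same three-part structure as the paper's: the MILP feasible set encodes exactly the pairs $(c,x^\star)$ with $x^\star$ optimal for $c\in\cC$; the invariant $\Vect\cD\subseteq\dir{\dualpoly{\cX}{\cC}}$ together with linear independence forces $\dim\Vect\cD$ to increase by one each iteration; and a fresh Gaussian $\alpha$ almost surely fails to be orthogonal to $\proj{(\Vect\cD)^\perp}{\dir{\dualpoly{\cX}{\cC}}}$ whenever that projection is nonzero. Your treatment is in fact more careful than the paper's on two points: you explicitly justify why $x^\star-x_0\in\dir{\dualpoly{\cX}{\cC}}$ even though the MILP solution $x^\star$ need not be an extreme point, and you spell out the conditioning-on-history argument (each $\alpha$ is resampled after $\cD$ is updated, so the relevant hyperplane is fixed given the filtration) together with the finite union bound, whereas the paper leaves the independence of $\alpha$ and the random subspace $(\Vect\cD)^\perp$ implicit.
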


By combining our previous analysis and results, we derive two algorithms that enable optimal decision-making based on prior knowledge of $c$ (uncertainty set $\cC$) and a prescribed measurement of datapoints guided by our theory. Specifically, \cref{alg:data collection algorithm} identifies which elements of $\cQ$ should be queried, using the output of \cref{alg:LP_case}, and returns a set $\cD$ containing these elements. Then, \cref{alg: decision making} produces a decision based on the information obtained by evaluating the objective function at the elements of $\cD$.

\begin{algorithm}[H] 
    \caption{Data Selection Under Query Constraints}
    \label{alg:data collection algorithm}
    \KwIn{Polyhedron $\cX$, Uncertainty Set $\cC$, Query Set $\cQ = \{q_1,\ldots,q_d \}$ (basis of $\R^d$)}
    \KwOut{A minimal \suff\ dataset under constraint $\cD \subset \cQ$.}
    Find $\{v_1,\dots,v_k\}$ a basis of $\spc{\dualpoly{\cX}{\cC}}$ using \cref{alg:LP_case}
    
    $Q \leftarrow [q_1,\ldots,q_d]$
    
    \textbf{return} $\cD:=\{q_i \; : \; i\in [d] \; \text{s.t.} \;\exists j\in [k],\; (Q^{-1}v_j)^\top e_i\neq 0\}$.
\end{algorithm}

\begin{algorithm}[H] 
    \caption{Decision-making with a \sufficient{}}
     \label{alg: decision making}
    \KwIn{Decision set $\cX$, Uncertainty Set $\cC$, \Sufficient\ $\cD = \{q_1,\ldots,q_N\}$, Oracle $\pi$ such that for any $q\in \cQ$, $\pi(q)=c^\top q$ where $c$ is the ground truth.
}
    \KwOut{A decision $\hat{x}\in \arg\min_{x\in \cX}c^\top x$.}

    $o_1,\ldots,o_N \leftarrow \pi(q_1),\ldots,\pi(q_N)$ 

    Compute $\hat{c}\in \argmin \{\sum_{i=1}^{N}(c'^\top q_i - o_i)^2 \; : \; c' \in \cC\}$.

    \textbf{return} $\hat{x} \in \argmin_{x\in \cX}\hat{c}^\top x$.
\end{algorithm}

\section{Application: Hiring Interviews} \label{sec:experiments}

To illustrate our insights, we apply our theoretical framework to the hiring problem detailed in \cref{sec:intro}.
The smallest sufficient dataset here is the smallest subset of candidates to interview to recover the optimal hiring decision. The application illustrates how the task constraints and uncertainty set shape data needs. 
The goal is to hire $20$ candidates from a pool of $d=100$ candidates. Each candidate is associated with two features: GPA and years of experience.
We study two settings: vanilla hiring, with only a total hire cap, and experience-constrained hiring, which also limits hires per seniority group. The decision sets are 
$\cX_{\text{vanilla}} := \{x \in \{0,1\}^d : \sum_{i=1}^d x_i \leq 20\}$ and $\cX_{\text{experience}} := \{x \in \cX_{\text{vanilla}} : \forall j \in [4], \sum_{i \in I_j} x_i \leq 8\},$ where $I_j$ is the set of candidates with $j$ years of experience. 
These constraints are totally unimodular, so relaxing $x \in \{0,1\}^d$ to $x \in [0,1]^d$ still yields optimal solutions via LP \citep[Chapter 3]{wolsey2020integer}. We assume a noisy linear model, i.e. candidate scores belong to
$$
\cC := \{c \in \mathbb{R}^d : \exists \alpha \in \mathbb{R}^2,\; \exists \varepsilon \in [-\eta, \eta],\; \ell \leq \alpha \leq u,\; c = \alpha^\top\phi  + \varepsilon\},
$$


where $\eta \geq 0$ controls the noise level, and $\ell=(4,4), u =(5,5)$. $\phi$ is a feature matrix whose rows are GPAs and years of experience of candidates. 

The GPAs of candidates are generated using a uniform distribution in the interval $[2,4]$, and the level of experience is also uniform in $\{1,2,3,4,5\}$. 
\cref{fig:hiring experiment} indicates candidates to interview to enable an optimal hiring decision, which is the output of \cref{alg:data collection algorithm} with the different sets $\cC$ and $\cX$, and $\cQ$ as the canonical basis vectors.

\paragraph{Impact of $\cC$.} As noise increases (the uncertainty set $\cC$ grows larger, from left to right), so does the number of required interviews: more uncertainty requires more data points. 

\paragraph{Impact of $\cX$.} In the first row of \cref{fig:hiring experiment}, candidates fall into three groups: low scorers (never hired), high scorers (always hired), and mid scorers (interviewed)---an intuitive pattern given the task, automatically recovered by our algorithm. 

When adding group hiring constraints---second row of \cref{fig:hiring experiment}---a similar pattern arise, but now across experience groups rather than the entire population: low noise yields separate treatment between experience groups, as scores don’t overlap across experience levels; high noise leads to cross-experience group comparisons and mixing---again, an intuitive pattern given the new constraints. 

One would naively expect that since $\cX_{\text{experience}}$ is smaller than $\cX_{\text{vanilla}}$, more data would be needed to make optimal decision in the vanilla setting, but that is not necessarily true. In \cref{fig:hiring experiment}, we see that in the high noise regime, more data is needed for the experience-constrained setting than the vanilla setting. In reality, the data needed depends on the geometry of the decision set $\cX$ relative to the uncertainty set $\cC$, as can be seen from \cref{thm: relatively open characterization} and \cref{cor:dualpoly-carac}.

\begin{figure}
    \centering
    \includegraphics[width=1\linewidth]{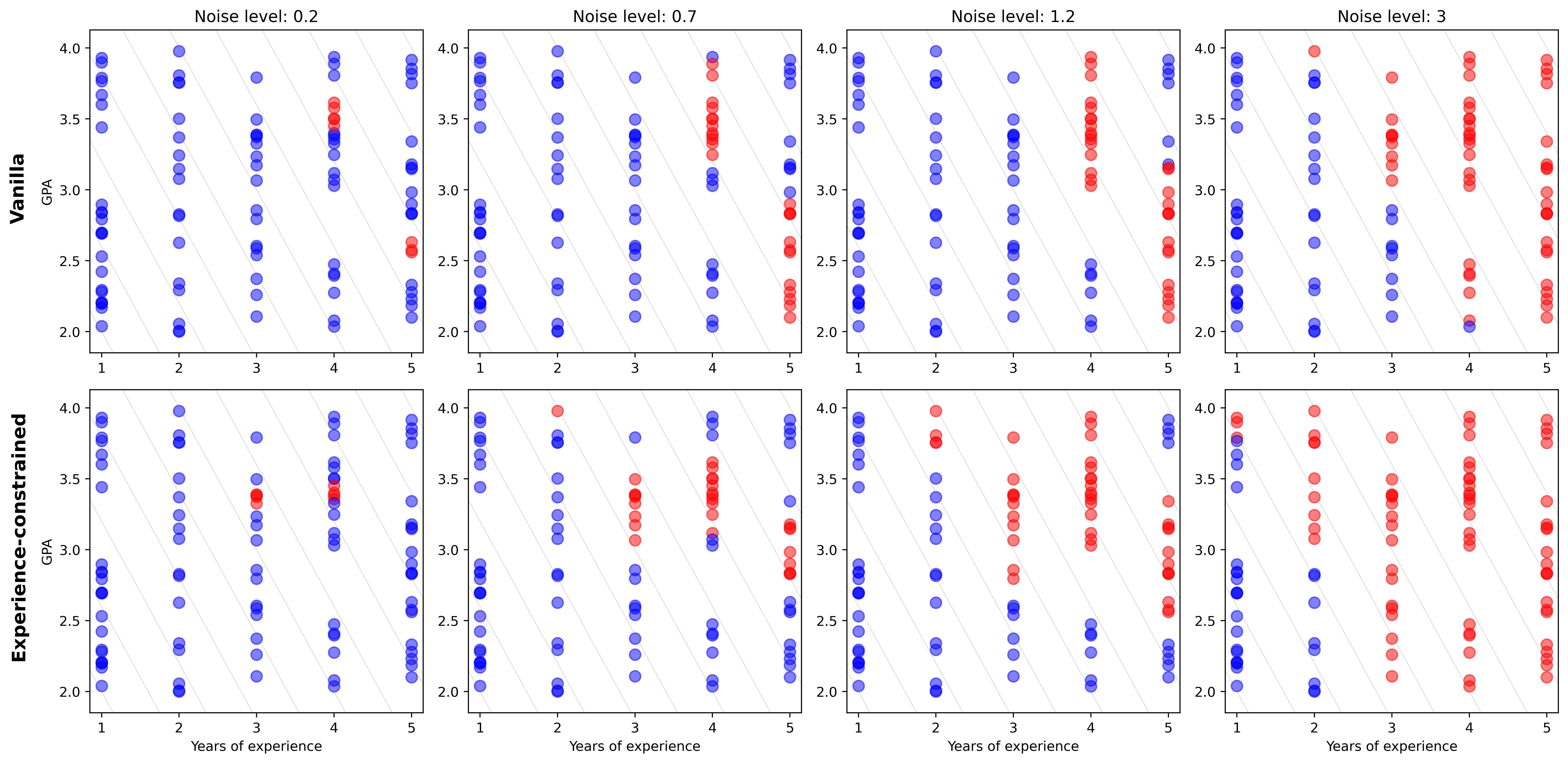}
    \caption{Candidates to be interviewed (in red) to make an optimal hiring decision. Number of candidates to interview from left to right for top and bottom row respectively: $8,24,31,52$ and $8,28,43,70$.}
    \label{fig:hiring experiment}
\end{figure}

\section{Conclusion and Limitations}
This paper introduces a framework for quantifying the informativeness of datasets in decision-making tasks. 
While our analysis yields sharp results, several natural extensions remain. First, we restrict attention to linear optimization; extending the framework to other problem classes, such as mixed-integer or convex programs, is an important direction even at the cost of approximate characterizations. Second, we assume query sets are basis vectors; accounting for general queries is a hard problem, but opens rich avenues for exploration. Third, our focus on convex, open uncertainty sets excludes important structured cases such as low-dimensional or discrete sets encoding symmetry or logical constraints. Finally, alternative notions of informativeness, such as approximate rather than exact optimality, or noisy observations rather than clean, merit further study.


\newpage 
\bibliography{references}


\newpage
\appendix

\section{Proofs}

\subsection{Proof of Proposition \ref{prop:sufficient:projections}}
\begin{proof} \;
    \begin{itemize}
        \item
        $(\Rightarrow)$ Assume that $\cD$ is a \sufficient{}. Let $c,c'\in \cC$ such that $c_{\Vect \cD}=c'_{\Vect \cD}$. We have for any $q\in \cD$, $c^\top q=c'^\top q$. Let $\hat{X}$ given by Definition \ref{def:sufficient}. We have $\hat X\left(c^\top q_1,\dots,c^\top q_N\right)=\hat X\left(c'^\top q_1,\dots,c'^\top q_N\right)$ i.e. $\arg\min_{x\in \cX}c^\top x=\arg\min_{x\in \cX}c'^\top x$.
        
        \item $(\Leftarrow)$ Assume that $\cD$ satisfies the property of the proposition. Since for any $c,c'\in \cC$ we have 
        $
            c_{\Vect \cD}=c'_{\Vect \cD}\Longleftrightarrow (c^\top q)_{q\in \cD}=(c'^\top q)_{q\in \cD},
        $
        then for any $c\in \cC$, we define $\hat X\left(c^\top q_1,\dots,c^\top q_N\right)$ to be equal to $\arg\min_{x\in \cX}c'^\top x$ for any $c'$ such that $c'_{\Vect \cD}=c_{\Vect \cD}$. This mapping is well-defined and verifies the desired property.
    \end{itemize}
\end{proof}
\subsection{Proof of Proposition \ref{prop:noisy approximation}}
\begin{proof}
Let $Q$ be a matrix whose rows are the elements of $\cD$ and $\hat{c}(o_1,\dots,o_r)\in \argmin \{\sum_{i=1}^{r}(c'^\top q_i - o_i)^2 \; : \; c' \in \cC\}$. Let $\eta := Q\hat{c}(o_1,\dots,o_r) - Q \ctrue$. Since $\hat{c}(o_1,\dots,o_r)=\arg\min_{c\in \cC}\norm{Qc-o}$, then we have $\norm{Q\hat{c}(o_1,\dots,o_r) -Q\ctrue - \varepsilon}\leq \norm{\varepsilon}$. Hence, we have  $\norm{Q\hat{c}(o_1,\dots,o_r) -Q\ctrue}\leq 2\norm{\varepsilon}$ and consequently $\norm{\eta}\leq 2 \norm{\varepsilon}$. 
    We would like to show that the distance between the projections of $\ctrue$ and $\hat{c}(o_1,\dots,o_r)$ in the span of $\cD$ is upper bounded by $O(\norm{\varepsilon})$. Consider $\alpha_{\text{true}},\hat{\alpha}\in \R^r$ such that $\hat{c}(o_1,\dots,o_r)_{\Vect \cD}=Q^\top \hat{\alpha}$ and $c_{\text{true},\Vect \cD}=Q^\top \alpha_{\text{true}}$. Without loss of generality, we can assume that $\cD$ is linearly independent. Indeed, if $\cD$ was linearly dependent, it would provide exactly the same information as any smallest cardinality subset of $\cD$ that spans all elements of $\cD$. In this case $Q$ is full rank and $QQ^\top$ is invertible. We have 
    \begin{align*}
        Q\hat{c}(o_1,\dots,o_r)-Q\ctrue=\eta &\implies Q(\hat{c}(o_1,\dots,o_r)_{\Vect \cD} - c_{\text{true},\Vect \cD})=\eta\\
        &\implies QQ^\top (\hat{\alpha}-\alpha_{\text{true}})=\eta\\
        &\implies \hat{\alpha}-\alpha_{\text{true}}=(QQ^\top)^{-1}\eta\\
        &\implies \hat{c}(o_1,\dots,o_r)_{\Vect \cD}-c_{\text{true},\Vect \cD}=Q^\top (QQ^\top)^{-1}\eta.
    \end{align*}
    Let $U\in \R^{r\times r}$, $V\in \R^{d\times d}$ and $\Sigma\in \R^{r\times d}$ such that $U,V$ are orthogonal matrices and for all $(i,j)\in [r]\times [d]$
    \begin{align*}
        \Sigma_{ij}=\begin{cases}\text{$\sigma_i$ the $i-$th singular value of $Q$} & \text{if }i=j\\ 0 &\text{else,}\end{cases}
    \end{align*}
    and $Q=U\Sigma V^\top$. We have 
    \begin{align*}
        Q^\top (QQ^\top)^{-1}&=U\Sigma V^\top (U\Sigma V^\top V \Sigma^\top U^\top)^{-1}\\
        &=U\Sigma V^\top (U\Sigma \Sigma^\top U^\top)^{-1}\\
        &=V\Sigma^\top  U^\top U(\Sigma \Sigma^\top)^{-1} U^\top\\
        &=V\Sigma^\top (\Sigma \Sigma^\top)^{-1} U^\top=V\Sigma'U^\top,
    \end{align*}
    where $\Sigma'\in \R^{d\times r}$ satisfies
    \begin{align*}
        \Sigma'_{ij}=\begin{cases}\text{$\frac{1}{\sigma_i}$ the $i-$th singular value of $Q$} & \text{if }i=j\\ 0 &\text{else.}\end{cases}
    \end{align*}
    Let $\lambda_{\text{min}}(D)$ the smallest singular value of $Q$. The calculations above gives, when $\norm{.}$ is the $L^2$ norm,
    \begin{align*}
        \norm{c_{\text{true},\Vect \cD}-\hat{c}(o_1,\dots,o_r)_{\Vect \cD}}=\norm{Q^\top (QQ^\top)^{-1}\eta}\leq \norm{Q^\top (QQ^\top)^{-1}}\cdot \norm{\eta}\leq \frac{2}{\lambda_{\text{min}}(D)}\norm{\varepsilon}.
    \end{align*}
    We now provide an essential lemma.
\begin{lemma}
        Assume that $\cC$ is open. Let $\cD$ a \sufficient{} for $\cC\subset \R^d$. Let $c\in \cC$. There exists $\mu>0$ such that for any $c'\in \cC$ such that $\norm{c_{\Vect \cD}-c'_{\Vect \cD}}<\mu$, we have $\arg\min_{x\in \cX}c'^\top x \subset \arg\min_{x\in \cX}c^\top x$.
    \end{lemma}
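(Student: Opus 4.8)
The plan is to exploit the finite geometry of optimality cones. Recall from Proposition~\ref{optimality cone def} that $\R^d = \bigcup_{x^\star \in \cX^\angle} \Lambda(x^\star)$, a finite union of closed polyhedral cones, and that $x^\star \in \argmin_{x\in\cX} c^\top x$ iff $c \in \Lambda(x^\star)$. Fix $c \in \cC$ and let $S = \{x^\star \in \cX^\angle : c \in \Lambda(x^\star)\}$ be the (finitely many) extreme points optimal at $c$; then $\argmin_{x\in\cX}c^\top x = \conv(S)$. The complementary set $T = \cX^\angle \setminus S$ consists of extreme points with $c \notin \Lambda(x^\star)$, i.e. $c$ has positive distance to each closed cone $\Lambda(x^\star)$ for $x^\star \in T$. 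Since $T$ is finite, there is $\rho > 0$ with $\Dist(c, \Lambda(x^\star)) > \rho$ for all $x^\star \in T$.

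First I would reduce everything to the subspace $F = \Vect\cD$. By Corollary~\ref{cor:dualpoly-carac} (or directly Theorem~\ref{thm: relatively open characterization}), sufficiency of $\cD$ means $\dir{\dualpoly{\cX}{\cC}} \subset F$, hence also $\Vect\Delta(\cX,\cC)\subset F$. The key point is that for any $c' \in \cC$, the optimality status of $c'$ with respect to cones that meet $\cC$ is determined by $c'_F$: this is exactly the content of Proposition~\ref{prop:sufficient:projections}, which gives $c_F = c'_F \implies \argmin_{x\in\cX}c^\top x = \argmin_{x\in\cX}c'^\top x$. So I want a quantitative version: if $\|c_F - c'_F\|$ is small, then $\argmin_{x\in\cX}c'^\top x \subset \argmin_{x\in\cX}c^\top x$.

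The main step is a continuity/compactness argument carried out in $F$. Suppose, for contradiction, that no such $\mu$ exists: there is a sequence $c'_n \in \cC$ with $\|c_F - c'_{n,F}\| \to 0$ but $\argmin_{x\in\cX}c_n'^\top x \not\subset \argmin_{x\in\cX}c^\top x$, i.e. some extreme point $x^\star_n \in T$ satisfies $c'_n \in \Lambda(x^\star_n)$. Passing to a subsequence (since $\cX^\angle$ is finite), we may assume $x^\star_n = \bar x \in T$ for all $n$. Now here is the delicate point, and the main obstacle: $c'_{n,F} \to c_F$ controls only the $F$-component of $c'_n$, not $c'_n$ itself, so I cannot directly conclude $c'_n \to c$. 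To handle this I would split $\R^d = F \oplus F^\perp$ and write $c'_n = c'_{n,F} + c'_{n,F^\perp}$. Since $\bar x \in T$, we have $c \notin \Lambda(\bar x)$, so there is an extreme direction $\delta \in D(\bar x)$ with $c^\top \delta < 0$. If $F(\bar x, \delta) \cap \cC \neq \varnothing$ then $\delta \in \Delta(\cX,\cC) \subset F$, so $\delta \perp F^\perp$ and hence $c_n'^\top \delta = c'^\top_{n,F}\delta \to c_F^\top \delta = c^\top\delta < 0$, contradicting $c'_n \in \Lambda(\bar x)$ for large $n$. If instead $F(\bar x,\delta)\cap \cC = \varnothing$: since $c'_n \in \cC \cap \Lambda(\bar x)$ and $\Lambda(\bar x) = \{c : c^\top\delta'\ge 0\ \forall \delta'\in D(\bar x)\}$ while $F(\bar x,\delta) = \Lambda(\bar x)\cap\{\delta\}^\perp$, the point $c'_n$ lies in $\Lambda(\bar x)$ but the face cut out by $\delta$ misses $\cC$; I would argue that the segment $[c, c'_n]$ must cross $\partial \Lambda(\bar x)$ (since $c \notin \Lambda(\bar x)$, $c'_n \in \Lambda(\bar x)$) at a point $c_{\alpha_n}$ which, by convexity of $\cC$, lies in $\cC$ — and since the crossing is through a face $F(\bar x, \delta^\star)$ for some active $\delta^\star \in D(\bar x)$, we get $F(\bar x,\delta^\star)\cap\cC\neq\varnothing$, so $\delta^\star \in \Delta(\cX,\cC)\subset F$. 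This exactly reproduces the mechanism of the $(\Rightarrow)$ direction of the proof of Theorem~\ref{thm: relatively open characterization}. Applying $c_F = c'_{n,F} + o(1)$ against $\delta^\star \perp F^\perp$ then forces $c^\top\delta^\star \ge 0 \ge$ (a quantity bounded away from $0$ along the sequence), the contradiction.

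Combining: the compactness-plus-finiteness argument yields a uniform $\mu' > 0$ such that $\|c_F - c'_F\| < \mu'$ and $c' \in \cC$ force every extreme point optimal at $c'$ to also be optimal at $c$, i.e. $\argmin_{x\in\cX}c'^\top x \subset \argmin_{x\in\cX}c^\top x$. Setting $\mu = \mu'$ proves the Lemma. I expect the only real subtlety to be the bookkeeping in the second case above — ensuring the boundary-crossing point of $[c,c'_n]$ genuinely sits on a face whose defining direction is in $\Delta(\cX,\cC)$ — which is precisely where the openness of $\cC$ (used to perturb into the interior, exactly as in the proof of Theorem~\ref{thm: relatively open characterization}) and its convexity enter.
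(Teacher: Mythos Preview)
Your plan is essentially workable, but it takes a substantially more complicated route than the paper. The paper's proof is a three-line compactness argument: since $\cC$ is open, replace $\cC$ by a closed ball $\overline{B(c,r)}\subset\cC$ (still sufficient, and now compact); if the lemma fails, take a sequence $c'_n$ with $c'_{n,F}\to c_F$ and a fixed $\bar x\in\cX^\angle$ with $c'_n\in\Lambda(\bar x)$ but $c\notin\Lambda(\bar x)$; extract a convergent subsequence $c'_n\to c'\in\cC$; then $c'\in\Lambda(\bar x)$ (closedness of the cone) and $c'_F=c_F$, so by Proposition~\ref{prop:sufficient:projections} alone $\argmin c'^\top x=\argmin c^\top x$, contradicting $\bar x\in\argmin c'^\top x\setminus\argmin c^\top x$. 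No use of Theorem~\ref{thm: relatively open characterization}, no face analysis, no case split.

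By contrast, you invoke the structural characterization $\Delta(\cX,\cC)\subset F$ and reproduce the boundary-crossing mechanism from the proof of Theorem~\ref{thm: relatively open characterization}. Two remarks. First, your Case~2 needs a further subsequence extraction so that $\delta^\star$ (which a priori depends on $n$) is fixed; only then is $c^\top\delta^\star$ a well-defined negative constant, and the contradiction with $c'_{n,F}{}^\top\delta^\star\ge 0$ and $c'_{n,F}\to c_F$ goes through. Your final sentence in Case~2 is garbled as written and should be made precise along these lines. Second, and more importantly, your argument uses convexity of $\cC$ (to keep the segment $[c,c'_n]$ inside $\cC$), whereas the lemma as stated assumes only openness, and the paper's proof uses only openness. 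So your route, even once cleaned up, proves a weaker statement. If you want to avoid the extra convexity hypothesis, the compactness reduction to a ball is the key simplification you are missing.
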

    \begin{proof}
        We assume without loss of generality that $\cC$ is compact (it suffices to replace $\cC$ by some closed ball of small radius centered around $c$ that is a subset of $\cC$). Assume that the result does not hold, i.e. there exists a sequence $c'_n\in \cC$ such that $c'_{n,\Vect D}$ converges to $c_{\Vect D}$, and for all $n\in \mathbb N$, there exists $x'\in \cX^\angle$ such that $x'\in \arg\min_{x\in \cX}c'^\top_n x \setminus \arg\min_{x\in \cX}c^\top x $. Since the number of extreme points in $\cX$ are finite, there exists $x'\in \cX^\angle$ and a strictly increasing map $\varphi:\mathbb N \longmapsto \mathbb N$ such that for all $n\in \mathbb N$, we have $x'\in \arg\min_{x\in \cX}c'^\top_{\varphi(n)} x \setminus \arg\min_{x\in \cX}c^\top x $. Since $\cC$ is compact, we can assume without loss of generality that the sequence $c'_{\varphi(n)}$ is convergent to some $c'\in \cC$ (it suffices to extract another time a converging sequence from $c'_{\varphi(n)}$). Consequently, since for all $n\in \mathbb N$, $c'_{\varphi(n)}\in \Lambda(x')$ (see \cref{optimality cone def} for definition of $\Lambda(x')$), and $\Lambda(x')$ is closed, then $c'\in \Lambda(x')$. Furthermore, we have $c\not\in \Lambda(x)$, and $c_{\Vect \cD}=c'_{\Vect \cD}$, which means that $\arg\min_{x\in \cX}c^\top x=\arg\min_{x\in \cX}c'^\top x$. This implies that $x'\in \arg\min_{x\in \cX}c^\top x$, i.e. $c\in \Lambda(x)$ which is impossible.
    \end{proof}
    
    When $\norm{\varepsilon}\leq \frac{\mu \lambda_{\text{min}}(D)}{2}$, we have 
    \begin{align*}
        \norm{c_{\text{true},\Vect \cD}-\hat{c}(o_1,\dots,o_r)_{\Vect \cD}}<\mu,
    \end{align*}
    i.e. from the lemma above, $\arg\min_{x\in \cX}\hat{c}(o_1,\dots,o_r)^\top x \subset \arg\min_{x\in \cX}\ctrue^\top x$.

    \end{proof}
 \subsection{Proof of Proposition \ref{prop:suff:vectorspace}} \label{proof prop:suff:vectorspace}
        \begin{proof}
        We denote $F:=\Vect \cD$. The condition $F_0\cap \Ker A \subset  \Vect \cD$ is equivalent to $F_0\cap \Ker A\perp F^\perp$, so in order to prove the equivalence with the 3rd proposition, we will prove the equivalence with $F_0\cap \Ker A\perp  F^\perp$.
    \begin{itemize}
        \item Assume that $ F^\perp\perp F_0 \cap \Ker A$.
        Let $c,c'\in \R^d$ such that $c_F=c'_F$. We will show that they have the same $\argmin$, which proves \suff{} as a result of \cref{prop:sufficient:projections}. We show that the mapping $x \in \cX \to (c-c')^\top x$ is constant. In fact, for $x,x' \in \cX$, we have
        \begin{align*} 
            (c-c')^\top x - (c-c')^\top x' = \underbrace{(c-c')^\top}_{\in  F^\perp} \underbrace{(x-x')}_{\in F_0 \cap \Ker A}=0,
        \end{align*}
        by the assumption $F^\perp\perp F_0 \cap \Ker A$.
Hence, the mappings $x\longmapsto c^\top x$ and $x\longmapsto c'^\top x$ are identical in $\cX$, within a constant. Consequently, we have $\arg \min_{x\in \cX}c^\top x = \arg\min_{x\in \cX}c'^\top x$.
        
        \item Assume that $ F^\perp\not\perp F_0 \cap \Ker A $.
        Let $c\in \R^d$.
        We would like to show that there exists $c'\in \cC$ such that $c_F=c_F'$ and $\arg\min_{x\in \cX}c^\top x \neq \arg\min_{x\in \cX}c'^\top x$. 
      Let $x^\star(c)\in \arg\min_{x\in \cX}c^\top x$. There exists a set of feasible directions for $x^\star(c)$, $V=\{\delta_1,\dots,\delta_r\} \subset \FD(x^\star(c))$, that spans $F_0 \cap \Ker A$ (see \cref{lemma:feasible directions in general polyhedron}). Since $V$ spans $F_0\cap \Ker A$, and $ F^\perp\not\perp F_0 \cap \Ker A $, then there exists $\delta\in V$ such that $\proj{ F^\perp}{\delta}\neq 0$. Let $M$ be a positive constant and define $c' = c -M \proj{ F^\perp}{\delta}$. We have $c'_F = c_F$. For all $\alpha >0$ such that $x^\star(c)+\alpha \delta \in \cX$, we have
    \begin{align*}
                c'^\top (x^\star(c)+\alpha \delta) &= {c'}^\top x^\star(c) + \alpha c^\top \delta - \alpha M\proj{F^\perp}{\delta}^\top \delta\\
                &=c'^\top x^\star(c) + \alpha c^\top \delta - \alpha M \norm{\proj{ F^\perp}{\delta}}^2.
            \end{align*}
            When $M$ is set to be large enough, we can see that we have $c'^\top (x^\star(c)+\alpha \delta )<c'^\top x^\star(c)$, which means that $x^\star (c)\not\in \arg\min_{x\in \cX}c'^\top x$.
  
  \end{itemize}

  

Let us now prove the final part of the proposition. Let $K>0$ and $\hat{x}:\R^N \longrightarrow \cX$. We first prove that the set of feasible directions from $\hat{x}\left(c^\top q_1,\dots,c^\top q_N\right)$ spans $F_0\cap \Ker A$. We know from Lemma \ref{lemma:feasible directions in general polyhedron} that the set of feasible directions from any extreme point spans $F_0\cap \Ker A $. Let $x^1,\dots,x^\ell$, $\ell\in \mathbb N$ and $\lambda_1,\dots,\lambda_l\in (0,1]$ such that $\hat{x}\left(c^\top q_1,\dots,c^\top q_N\right)=\sum_{i=1}^{\ell}\lambda_i x_i$. For any feasible direction $\delta$ from $x^1$, for $\alpha>0$, we have 
\begin{align*}
    \hat{x}\left(c^\top q_1,\dots,c^\top q_N\right) +\alpha \delta = \lambda_1(x^1 + \frac{1}{\lambda_1}\alpha \delta) + \sum_{i=2}^{\ell}\lambda_ix^i.
\end{align*}
For $\alpha$ small enough, we can see that $x^1 + \frac{1}{\lambda_1}\alpha \delta\in \cX$ and consequently $\hat{x}\left(c^\top q_1,\dots,c^\top q_N\right) +\alpha \delta\in \cX$. Hence, any feasible direction from $x^1$ is feasible from $\hat{x}\left(c^\top q_1,\dots,c^\top q_N\right)$. This means that the feasible directions from $\hat{x}\left(c^\top q_1,\dots,c^\top q_N\right)$ span $F_0\cap \Ker A$. Hence, there exist $\delta\neq 0$ a feasible direction from $\hat{x}\left(c^\top q_1,\dots,c^\top q_N\right)$ such that $\delta_{F^\perp}\neq 0$. $c_{F^\perp }$ can take any value in $ F^\perp $ without changing the values of $c^\top q_1,\dots, c^\top q_N$. Consequently, we set $c_{ F^\perp }=-M\delta_{F^\perp }$ where $M$ is a nonnegative number that we will set later. Hence, letting $\alpha >0$ such that $\hat{x}\left(c^\top q_1,\dots,c^\top q_N\right)+\alpha \delta \in \cX$, we have 
\begin{align*}
    c^\top (\hat{x}\left(c^\top q_1,\dots,c^\top q_N\right)+\alpha \delta)&=c^\top \hat{x}\left(c^\top q_1,\dots,c^\top q_N\right) + \alpha c_F^\top \delta + \alpha c_{ F^\perp }^\top  \delta\\
    &=c^\top \hat{x}\left(c^\top q_1,\dots,c^\top q_N\right) + \alpha c_F^\top \delta  - M \alpha \norm{\delta_{ F^\perp }}^2
\end{align*}
This implies $c^\top \hat{x}\left(c^\top q_1,\dots,c^\top q_N\right) + \alpha c_F^\top \delta  - M \alpha \norm{\delta_{ F^\perp }}^2 \geq \min_{x\in \cX}c^\top x$, i.e. 
\begin{align*}
    c^\top \hat{x}\left(c^\top q_1,\dots,c^\top q_N\right) \geq - \alpha c_F^\top \delta  + M \alpha \norm{\delta_{F^\perp }}^2 +\min_{x\in \cX}c^\top x.
\end{align*}
Taking $M\geq \frac{K +\alpha c_F^\top \delta }{\alpha \norm{\delta_{ F^\perp }}^2 }$, we indeed get 
\begin{align*}
    c^\top \hat{x}\left(c^\top q_1,\dots,c^\top q_N\right) \geq K +\min_{x\in \cX}c^\top x.
\end{align*}
 \end{proof}
\subsection{Proof of Proposition \ref{feasible directions polyhedral cone}} \label{feasible directions polyhedral cone proof}
\begin{proof}

    Let $x^\star \in \cX^\angle$. We denote $J=\{i\in [d], x^\star_i = 0\}$ and $I_0=\{i\in [d],\; \exists x\in \cX,\; x_i\neq 0\}$. For every $\delta \in \R^d$, we have 
    \begin{align*}
        \delta \in \FD(x^\star) \Longleftrightarrow \exists \varepsilon>0,\; x^\star +\varepsilon \delta \geq 0\text{ and }A\delta =0
        \Longleftrightarrow A\delta = 0 \text{ and }\delta_j \geq 0 \text{ for every }j\in J.
    \end{align*}
    This means that $\FD(x^\star)$ is a polyhedral cone, and $\FD(x^\star)\subset \Ker A$. Furthermore, since $[d]\setminus I_0 \subset J$, we also have $\FD(x^\star) \subset F_0$ which yields $\FD(x^\star)\subset F_0 \cap \Ker A$.
\end{proof}
\subsection{Proof of Proposition \ref{optimality cone def}} \label{proof optimality cone def}
\begin{proof}
Let $x^\star \in \cX^\angle$. For every $c\in \R^d$, we have
$$x^\star \in \arg\min_{x\in \cX}c^\top x \Longleftrightarrow \forall \delta \in \FD(x^\star),\; c^\top \delta \geq 0 \Longleftrightarrow \forall \delta \in D(x^\star),\; c^\top \delta \geq 0. $$
\end{proof}

  \subsection{Proof of Theorem \ref{thm:span delta is dir x}} \label{proof thm:span delta is dir x}
 Before proving the theorem, we will have to introduce a few lemmas and definition.
\begin{lemma} \label{pseudo continuity of argmin}
    For any $c\in \R^d$, there exists $\varepsilon>0$ such that for any $c'$ satisfying $\norm{c-c'}<\varepsilon$, $\arg\min_{x\in \cX}c^\top x \cap \arg\min_{x\in \cX}c'^\top x\neq \varnothing$.
\end{lemma}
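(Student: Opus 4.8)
The plan is to prove that the argmin correspondence is lower hemicontinuous at every cost vector, in the weak sense that the optimal sets at $c$ and at any sufficiently close $c'$ share at least one common point. First I would fix $c \in \R^d$ and let $x^\star \in \cX^\angle$ be an extreme point that is optimal for $c$, which exists because $\cX$ is bounded. The natural candidate for the common point is this same $x^\star$: I want to show $x^\star \in \arg\min_{x\in\cX} c'^\top x$ for all $c'$ close enough to $c$, or — if that is too strong — at least that some extreme point optimal for $c$ remains optimal for $c'$.

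The key tool is the optimality cone characterization of \cref{optimality cone def}: $x^\star$ is optimal for a cost vector $c''$ if and only if $c'' \in \Lambda(x^\star) = \{c'' : c''^\top \delta \geq 0 \ \forall \delta \in D(x^\star)\}$, and this cone is closed. The obstacle is that $c$ may lie on the boundary of $\Lambda(x^\star)$ (i.e. $c^\top \delta = 0$ for some extreme direction $\delta$), so an arbitrarily small perturbation can push $c$ out of $\Lambda(x^\star)$; hence a single fixed extreme point will not work in general. The standard fix is a finiteness argument. Let $x^\star_1,\dots,x^\star_k$ be the (finitely many) extreme points of $\cX$ that are optimal for $c$, i.e. the $x^\star_j$ with $c \in \Lambda(x^\star_j)$. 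Suppose, for contradiction, that the lemma fails: then there is a sequence $c_n \to c$ with $\arg\min_{x\in\cX} c^\top x \cap \arg\min_{x\in\cX} c_n^\top x = \varnothing$ for every $n$. Each $c_n$ has some optimal extreme point $y_n \in \cX^\angle$; since $\cX^\angle$ is finite we may pass to a subsequence along which $y_n \equiv y$ is constant, with $y \notin \arg\min_{x\in\cX} c^\top x$, i.e. $c \notin \Lambda(y)$. But $c_n \in \Lambda(y)$ for all $n$ in the subsequence, and $\Lambda(y)$ is closed, so taking $n \to \infty$ gives $c \in \Lambda(y)$ — a contradiction. Therefore such a sequence cannot exist, which is exactly the statement that there exists $\varepsilon > 0$ with the desired property (otherwise one could extract the offending sequence by taking $c_n$ within $1/n$ of $c$).

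I expect the main obstacle to be purely presentational rather than mathematical: making sure the contradiction is set up so that the "for every $\varepsilon$ there is a bad $c'$" negation is correctly converted into a sequence, and invoking the finiteness of $\cX^\angle$ at the right moment so that the subsequence has a constant optimal extreme point. Everything else — boundedness of $\cX$ guaranteeing an extreme optimal point, and closedness of $\Lambda(y)$ from \cref{optimality cone def} — is already available in the excerpt. No explicit estimate on $\varepsilon$ is needed, only its existence, which is why the compactness/finiteness contradiction is the cleanest route.
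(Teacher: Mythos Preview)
Your proposal is correct and follows essentially the same approach as the paper: assume the conclusion fails, extract a sequence $c_n\to c$ whose optimal sets are disjoint from $\arg\min_{x\in\cX}c^\top x$, use finiteness of $\cX^\angle$ to get a constant optimal extreme point $y$ along a subsequence, and reach a contradiction via the closedness of $\Lambda(y)$. The paper's proof is exactly this argument, with no additional ingredients.
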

\begin{proof}
    Assume that there exists $c\in \R^d$ such that for all $\varepsilon>0$, there exists $c'$ satisfying $\norm{c-c'}<\varepsilon$ and $\arg\min_{x\in \cX}c^\top x \cap \arg\min_{x\in \cX}c'^\top x =\varnothing$. There exists a sequence $(c'_n)_{n \in \integ}$ that converges to $c$ such that for all $n\in \mathbb N$, there exists $x\in \cX^\angle\setminus \arg\min_{x\in \cX}c^\top x$ such that $x\in \arg\min_{x\in \cX}c_n'^\top x$. Since there is a finite number of extreme points, there exists a subsequence $(c'_{\varphi(n)})_{n \in \integ}$ and $x\in \cX^\angle\setminus \arg\min_{y\in \cX}c^\top y$ such that for all $n\in \mathbb N$, we have $x\in \arg\min_{y\in \cX}c_{\varphi(n)}'^\top y$, i.e. $c'_{\varphi(n)}\in \Lambda(x)$. Hence, since $\Lambda(x)$ is closed, we have $c\in \Lambda(x)$ and $x\not\in \arg\min_{y\in \cX}c^\top y$ which is not possible.
\end{proof}

\begin{definition}[Extreme Point Neighbors]
    Let $\cC\subset \R^d$. For any two extreme points $x_1,x_2\in X^\angle$, we say that $x_1$ and $x_2$ are neighbors in $\cX$ if there exists an extreme direction $\delta \in D(x_1)$ such that $x_2=x_1+\delta$. We say that they are $\cC-$strong neighbors in $\cX$ if furthermore there exists $c\in \cC$ such that $x,x'\in \arg\min_{y\in \cX}c^\top y$.
\end{definition}

\begin{definition}[Connected and $\cC-$strongly Connected Points]
    For any subset $\cal Y\subset \cX^\angle$ and $\cC\subset \R^d$, for any pair of elements $x,x'\in \cal Y$, we say that $x,x'$ are connected by neighboring extreme points in $\cal Y$ if there exist $h\in \mathbb N$ and a sequence $x_1,\dots,x_h\in  \cal Y$ such that for all $i\in [h-1]$, $x_i$ and $x_{i+1}$ are neighbors in $\cX$ and $x_1=x$ and $x_h=x'$. We say that they are $\cC-$strongly connected, when $x_i$ and $x_{i+1}$ are $\cC-$strong neighbors.
    When there is no ambiguity, we say that $x$ and $x'$ are (strongly) connected. 
    
    We say that the set $\cal Y$ is ($\cC-$strongly) connected by neighboring extreme points if this property holds for any pair of extreme points in $\cal Y$. When there is no ambiguity, we say that $\cal Y$ is (strongly) connected. For any element $x$ of $\cal Y$, we call the ($\cC-$strong) connection class of $x$ the set of points in $\cal Y$ that are ($\cC-$strongly) connected by neighboring extreme points to $x$.
\end{definition}
\begin{lemma} \label{strong connection in argmin}
    For any $c\in \cC$, $\cX^\angle \cap \arg\min_{x\in \cX}c^\top x$ is $\cC-$strongly connected by neighboring extreme points in $\cX$.
\end{lemma}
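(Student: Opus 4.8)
The plan is to observe that $\arg\min_{x\in\cX}c^\top x$ is itself a bounded polyhedron --- a face of $\cX$ --- and to move between its extreme points along its edges, noting that each such edge is an edge of $\cX$ whose two endpoints are \emph{both} optimal for $c$, hence a $\cC$-strong neighbor pair witnessed by the fixed $c$.

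Fix $c\in\cC$ and set $G:=\arg\min_{x\in\cX}c^\top x$. Since $\cX$ is bounded and nonempty, $G$ is a nonempty face of $\cX$, hence a bounded polyhedron, and its extreme points are exactly $\cX^\angle\cap G$. It therefore suffices to establish two facts: (i) the vertex--edge graph of the polytope $G$ is connected; (ii) every edge of $G$, between vertices $x_1$ and $x_2$, witnesses that $x_1$ and $x_2$ are $\cC$-strong neighbors in $\cX$. Granting these, for any $x,x'\in\cX^\angle\cap G$ an edge path $x=x_1,\dots,x_h=x'$ in $G$ is a chain of $\cC$-strong neighbors, so $x$ and $x'$ are $\cC$-strongly connected, which is the claim.

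For (i) I would give the standard pivoting argument rather than merely cite it: choose a linear functional $w$ generic over $G$, so that it attains a unique minimizer $v^\star$ on $G$ and is non-constant on every edge of $G$. From any vertex $v\neq v^\star$, non-optimality of $v$ for $w$ on $G$ yields an extreme direction $\delta\in D(v)$ of the feasible-direction cone of $G$ at $v$ with $w^\top\delta<0$; following this edge reaches an adjacent vertex $v'$ of $G$ with $w^\top v'<w^\top v$. Since $\cX$ --- hence $G$ --- has finitely many extreme points, iterating must terminate at $v^\star$, so every vertex of $G$ is edge-connected to $v^\star$ and the graph of $G$ is connected. For (ii), let $x_1,x_2$ be joined by an edge of $G$. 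That segment $[x_1,x_2]$ is a one-dimensional face of $G$, hence a one-dimensional face of $\cX$ by transitivity of faces; being an edge of $\cX$ emanating from the extreme point $x_1$ and $\cX$ being bounded, it terminates at $x_2$ and $x_2-x_1$ is a positive multiple of an extreme ray of $\FD(x_1)$, i.e. $x_2-x_1\in D(x_1)$, so $x_1$ and $x_2$ are neighbors in $\cX$. Moreover $[x_1,x_2]\subset G=\arg\min_{x\in\cX}c^\top x$, so both endpoints lie in $\arg\min_{x\in\cX}c^\top x$ with the common witness $c\in\cC$; hence they are $\cC$-strong neighbors.

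The main obstacle is step (i), and specifically the claim that from a non-optimal vertex one can strictly decrease $w$ \emph{along an edge} (an extreme direction belonging to $D(\cdot)$), not merely along some feasible direction: this is where genericity of $w$ is used to exclude degenerate ties in which an adjacent vertex matches the objective value, and one must verify that the "edge of $G$" being walked along is genuinely a face of $\cX$ and therefore an extreme direction in the sense of the neighbor definition. The remaining ingredients --- that $G$ is a face, that extreme points of $G$ are extreme points of $\cX$, and transitivity of faces --- are routine polyhedral facts; the edge case $G=\{x\}$ (one extreme point) makes strong connectedness vacuous and is handled trivially by the path of length one.
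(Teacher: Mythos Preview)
Your proposal is correct and follows essentially the same approach as the paper: both observe that $G=\arg\min_{x\in\cX}c^\top x$ is a bounded polyhedron whose extreme points and extreme directions (edges) are inherited from $\cX$, so the edge-graph of $G$ yields a chain of $\cC$-strong neighbors witnessed by the fixed $c$. The paper's proof simply asserts the connectedness of the edge-graph and the inheritance of extreme directions, whereas you spell out a pivoting argument for (i) and invoke transitivity of faces for (ii); this is added detail, not a different route.
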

\begin{proof}
    Let $c\in \cC$. Every extreme point in $\arg\min_{x\in \cX}c^\top x$ is also an extreme point in $\cX$ (see \cref{cone equivalence}), and every extreme direction in $\arg\min_{x\in \cX}c^\top x$ is also an extreme direction in $\cX$. Hence, since $\arg\min_{x\in \cX}c^\top x$ is a bounded polyhedron, $\cX^\angle \cap \arg\min_{x\in \cX}c^\top x$ is connected by neighboring extreme points in $\cX$. Furthermore, by definition, since $\cX^\angle \cap \arg\min_{x\in \cX}c^\top x\subset \arg\min_{x\in \cX}c^\top x$, then $\cX^\angle \cap \arg\min_{x\in \cX}c^\top x$ is $\cC-$strongly connected.
\end{proof}
\begin{lemma} \label{dual poly is strongly connected}
     When $\cC$ is convex, $\dualpoly{\cX}{\cC}\cap \cX^\angle$ is $\cC-$strongly connected by neighboring extreme points.
\end{lemma}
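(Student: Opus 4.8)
The plan is to show that $\dualpoly{\cX}{\cC}\cap\cX^\angle$ forms a single $\cC$-strong connection class by combining Lemma \ref{strong connection in argmin} (which handles extreme points optimal for a \emph{common} cost vector) with a convexity argument that lets one "travel" between cost vectors inside $\cC$. First I would fix two extreme points $x, x' \in \dualpoly{\cX}{\cC}\cap\cX^\angle$, with $x \in \arg\min_{y\in\cX}c^\top y$ and $x' \in \arg\min_{y\in\cX}c'^\top y$ for some $c,c'\in\cC$. By convexity of $\cC$, the segment $\{c_\alpha := (1-\alpha)c + \alpha c' : \alpha\in[0,1]\}$ lies entirely in $\cC$, so every extreme point optimal for some $c_\alpha$ belongs to $\dualpoly{\cX}{\cC}$. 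The goal is to produce a $\cC$-strongly connected chain from $x$ to $x'$ by stitching together, along this segment, the connection established in Lemma \ref{strong connection in argmin} at finitely many "breakpoints."

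The key steps, in order: (1) Define a partition of $[0,1]$ into finitely many subintervals on each of which the optimal face is "stable" — more precisely, use the fact that as $\alpha$ ranges over $[0,1]$, the optimal set $\arg\min_{y\in\cX}c_\alpha^\top y$ takes only finitely many values (since it is determined by which subset of the finitely many extreme points/directions is optimal), so there is a finite sequence $0 = \alpha_0 < \alpha_1 < \cdots < \alpha_k = 1$ such that between consecutive breakpoints the optimal set is constant, and at each breakpoint $\alpha_j$ the optimal set is a (larger) face containing the optimal sets of both adjacent open intervals. (2) Use Lemma \ref{pseudo continuity of argmin}: near each $\alpha_j$, the optimal set of $c_{\alpha_j}$ shares an extreme point with the optimal sets on either side, and in fact $\cX^\angle \cap \arg\min_{y\in\cX}c_{\alpha_j}^\top y$ contains $\cX^\angle \cap \arg\min_{y\in\cX}c_\alpha^\top y$ for $\alpha$ in a neighborhood of $\alpha_j$ in $[0,1]$. (3) Apply Lemma \ref{strong connection in argmin} to $c_{\alpha_j}$: the set $\cX^\angle \cap \arg\min_{y\in\cX}c_{\alpha_j}^\top y$ is $\cC$-strongly connected, so any extreme point optimal just before $\alpha_j$ is $\cC$-strongly connected to any extreme point optimal just after $\alpha_j$. (4) Chain these together: pick extreme points $x = z_0, z_1, \dots, z_k = x'$ where $z_j \in \cX^\angle \cap \arg\min c_{\beta_j}^\top y$ for some $\beta_j$ in the $j$-th open subinterval (and $z_0 = x$, $z_k = x'$ adjusted at the endpoints); consecutive $z_{j-1}, z_j$ are both optimal for $c_{\alpha_j}$, hence $\cC$-strongly connected via Lemma \ref{strong connection in argmin}; and $z_{j-1}$ is connected to $x$-side / $z_j$ to the next piece. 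Concatenating all these chains yields a $\cC$-strong chain from $x$ to $x'$.

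The main obstacle I anticipate is step (1)–(2): making precise the claim that the optimal set along the segment $c_\alpha$ changes only finitely often and that, at each breakpoint, the optimal face of $c_{\alpha_j}$ \emph{contains} the optimal extreme points of the neighbouring intervals — this is the crux, because it is exactly what allows Lemma \ref{strong connection in argmin} at $c_{\alpha_j}$ to bridge the two sides. The cleanest way to handle it is to argue via the optimality cones $\Lambda(\cdot)$: the segment $\alpha \mapsto c_\alpha$ passes through finitely many cones $\Lambda(z)$, $z\in\cX^\angle$, and whenever it crosses from the relative interior of one cone into a shared face and then into another, the cost vector at the crossing point lies in $\Lambda(z)\cap\Lambda(z')$ for the two neighbouring extreme points, so both are optimal there; boundedness of $\cX$ guarantees $\R^d = \bigcup_{z\in\cX^\angle}\Lambda(z)$ and finiteness of the cover. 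Once this geometric picture is set up, the rest is routine bookkeeping with Lemmas \ref{pseudo continuity of argmin} and \ref{strong connection in argmin}.
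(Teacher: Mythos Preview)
Your proposal is correct and rests on the same three ingredients the paper uses: the convex segment $c_\alpha=(1-\alpha)c+\alpha c'$ inside $\cC$, Lemma~\ref{pseudo continuity of argmin} to guarantee overlap of optimal sets at transition points, and Lemma~\ref{strong connection in argmin} to bridge within a single $\arg\min$. The difference is purely structural. You build an explicit finite partition $0=\alpha_0<\cdots<\alpha_k=1$ of the segment into intervals of constant optimal face, then chain $\cC$-strong connections across the breakpoints. The paper instead argues by contradiction: assuming $x,x'$ are not $\cC$-strongly connected, it lets $K$ be the $\cC$-strong connection class of $x$ (restricted to optima along the segment), sets $\alpha^\star=\max\{\alpha:K\cap\arg\min c_\alpha^\top y\neq\varnothing\}$, and derives a contradiction by showing (via Lemmas~\ref{pseudo continuity of argmin} and~\ref{strong connection in argmin}) that $K$ must also meet $\arg\min c_{\alpha^\star+\varepsilon}^\top y$ for some small $\varepsilon>0$. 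The supremum argument is a bit slicker because it sidesteps the explicit breakpoint bookkeeping you flag as your main obstacle---you never need to prove that the optimal face changes only finitely often, only that the ``reachable'' set $K$ cannot stop strictly before $\alpha=1$. Your direct construction is perfectly valid once the finite-partition claim is made rigorous (which it can be, via closedness of each $\Lambda(z)$ intersected with the segment), and it has the advantage of producing an explicit chain rather than a contradiction.
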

\begin{proof}
    Assume that there exist $x,x'\in  \dualpoly{\cX}{\cC}\cap \cX^\angle$ that are not strongly connected. Let $c,c'\in \cC$ such that $x\in \arg\min_{y\in \cX}c^\top y$, $x'\in \arg\min_{y\in \cX}c'^\top y$. For any $\alpha \in [0,1]$, we denote $c_\alpha:=(1-\alpha)c+\alpha c'$. Let 
    \begin{align*}
        U:=\{x^\star \in \cX^\angle,\; \exists \alpha\in[0,1],\; x^\star \in \arg\min_{y\in \cX}c_\alpha^\top y\}\subset \dualpoly{\cX}{\cC}\cap \cX^\angle.
    \end{align*}
    
    Let $K$ be the intersection of $U$ and the connection class of $x$. We have $x'\not\in K$. Let
    
    \begin{align*}
        \alpha^\star =\max\left\{\alpha \in [0,1],\;  K \cap \arg\min_{y\in \cX}c_\alpha^\top y \neq \varnothing\right\}.
    \end{align*}
    
    If $\alpha^\star=1$, then there exists $v\in \arg\min_{y\in \cX}c'^\top y$ such that $v\in K$. From Lemma \ref{strong connection in argmin}, $\cX^\angle \cap \arg\min_{y\in \cX}c'^\top y$ is $\cC-$strongly connected and $v\in K\cap \cX^\angle \cap \arg\min_{y\in \cX}c'^\top y$ and consequently $x'\in K$, and therefore is connected to $x$ which contradicts our assumption. Hence, we necessarily have $\alpha^\star<1$. 
    Furthermore, from Lemma \ref{pseudo continuity of argmin}, there exists $\varepsilon\in (0,1-\alpha^\star)$ such that 

    \begin{equation}\label{eq:intersec_argmins}
    \arg\min_{y\in \cX}c_{\alpha^\star + \varepsilon}^\top y \cap \arg\min_{y\in \cX}c_{\alpha^\star}^\top y \neq \varnothing.
    \end{equation}

    As $K\cap \arg\min_{y\in \cX}c_{\alpha^\star}^\top y \neq \varnothing$ and $\arg\min_{y\in \cX}c_{\alpha^\star}^\top y $ is $\cC-$strongly connected from \cref{strong connection in argmin}, we have $\arg\min_{y\in \cX}c_{\alpha^\star}^\top y \subset K$. Combined with \eqref{eq:intersec_argmins}, it implies that $K\cap \arg\min_{y\in \cX}c_{\alpha^\star+\varepsilon}^\top y \neq \varnothing$. This contradicts the supremum definition of $\alpha^\star$.
\end{proof}

We have now enough tools to prove the theorem.

\begin{proof}[Proof of \cref{thm:span delta is dir x}]

We have
    \begin{align} 
        \Vect \Delta(\cX,\cC)& \underset{(1)}{=} \Vect \{x_1-x_2,\; x_1,x_2\in \cX^\angle\cap \dualpoly{\cX}{\cC},\; x_1 \text{ and }x_2 \text{ are $\cC-$strong neighbors}\} \label{set 1}
        \\& \underset{(2)}{=} \Vect \{x_1-x_2,\; x_1,x_2\in \cX^\angle\cap \dualpoly{\cX}{\cC}\} \label{set 2}
        \\& \underset{(3)}{=}\dir{\cX^\angle\cap \dualpoly{\cX}{\cC}} \label{set 3}
        \\&\underset{(4)}{=}\dir{\dualpoly{\cX}{\cC}}.\label{set 5}
    \end{align}
    Let's justify each of the equalities above.
    \begin{itemize}
        \item (1) Let $\delta \in \Delta(\cX,\cC)$. There exists $c\in \cC$ and $x\in \cX^\angle$ such that $c\in F(x,\delta)$. This means that $x\in \arg\min_{y\in \cX}c^\top y$, $\delta$ is an extreme direction for $x$ in $\cX$, and $c^\top \delta=0$. Consequently, there exists $\eta>0$ such that $x':=x+\eta \delta $ is an extreme point
        , that is a neighbor of $x$ by definition. Also, we have $x'\in \arg\min_{y\in \cX}c^\top y$. Hence, $\delta =\frac{1}{\eta}(x'-x)$, which proves
        $$\Delta(\cX,\cC)\subset \Vect\{x_1-x_2,\; x_1,x_2\in \cX^\angle\cap \dualpoly{\cX}{\cC},\; x_1 \text{ and }x_2 \text{ are $\cC-$strong neighbors}\}.$$
        Conversely, if $x_1,x_2\in \cX^\angle \cap \dualpoly{\cX}{\cC}$ are $\cC-$strong neighbors, then there exists an extreme direction $\delta$ for $x_1$ such that $x_2=x_1 + \delta$ and $c\in \cC$ such that $x_1,x_2\in \arg\min_{y\in \cX}c^\top y$. Hence, we have $c^\top \delta=0$, and consequently $c\in F(x_1,\delta)$, which means that $\delta \in \Delta(\cX,\cC)$, i.e. $x_1-x_2 \in \Delta (\cX,\cC)$. This proves the desired equality.
        \item (2) Set \cref{set 1} is clearly a subset of set \cref{set 2}. Let's prove the converse inclusion. Let $x,x'\in \cX^\angle \cap \dualpoly{\cX}{\cC}$. According to Lemma \ref{dual poly is strongly connected}, there exists $h\in \mathbb N$ and a sequence $x_1,\dots,x_h\in \cX^\angle \cap \dualpoly{\cX}{\cC}$ such that $x_1=x$ and $x_h=x'$ and for all $i\in [h-1]$, $x_i,x_{i+1}$ are $\cC-$strongly connected. Hence, we have 
        \begin{align*}
            x-x'=\sum_{i=1}^{h-1}x_{i+1}-x_i.
        \end{align*}
        All of the terms in the sum above are in set $\eqref{set 1}$ and therefore their sum as well, by linearity. Hence, we indeed have the inclusion.
        \item (3) This equality is immediate since for any $x_1,x_2 \in \cX^\angle\cap \dualpoly{\cX}{\cC}$, $x_1-x_2=x_1 - x_0 -(x_2-x_0)$ for any $x_0\in  \cX^\angle\cap \dualpoly{\cX}{\cC}$ and consequently $x_1-x_2\in \dir{ \cX^\angle\cap \dualpoly{\cX}{\cC}}$. 
        \item (4) In order to prove this equality, we prove that $\dualpoly{\cX}{\cC}\subset \conv(\cX^\angle \cap \dualpoly{\cX}{\cC})$. Let $x\in \dualpoly{\cX}{\cC}$ and $c\in \cC$ such that $x\in \arg\min_{y\in \cX}c^\top y$. There exists $\alpha_1,\dots,\alpha_k\in (0,1]$ such that $\alpha_1+\dots+\alpha_k=1$ and $x_1,\dots,x_k\in \cX^\angle$ such that $x=\sum_{i=1}^{k}\alpha_kx_k$. We have 
        \begin{align*}
            \min_{y\in \cX}c^\top y \geq \sum_{i=1}^{k}\alpha_kc^\top x_k \text{ i.e. }\sum_{i=1}^{k}\alpha_k(c^\top x_k-\min_{y\in \cX}c^\top y)\leq 0.
        \end{align*}
        All of the terms in the sum are positive, and are consequently equal to $0$. Hence we have $x_1,\dots,x_k\in \arg\min_{y\in \cX}c^\top y \subset \dualpoly{\cX}{\cC}$. Consequently, we have $x\in \conv(\cX^\angle \cap \dualpoly{\cX}{\cC})$. Hence, we have 
        \begin{align*}
            \dir{\dualpoly{\cX}{\cC}}&\subset \dir{\conv(\cX^\angle \cap \dualpoly{\cX}{\cC})}\\&=\dir{X^\angle \cap \dualpoly{\cX}{\cC}} \\&\subset \dir{\dualpoly{\cX}{\cC}}.
        \end{align*}
        This proves the desired equality.
    \end{itemize}
\end{proof}

\subsection{Proof of Proposition \ref{prop:equivalence-argmin}}\label{prop:equivalence-argmin proof}
Before proving the proposition, we need to introduce the following lemma.
\begin{lemma}\label{existence of unique argmin}
    For any $x^\star\in \cX^\angle$, there exists $c\in \R^d$ such that $\arg\min_{x\in \cX}c^\top x=\{x^\star\}$, i.e. for all $\delta \in D(x^\star)$, $c^\top \delta >0$.
\end{lemma}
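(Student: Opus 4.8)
The plan is to exploit the duality between feasible directions and optimality cones established in \cref{feasible directions polyhedral cone} and \cref{optimality cone def}. Fix $x^\star \in \cX^\angle$. By \cref{optimality cone def}, the optimality cone is $\Lambda(x^\star) = \{c : c^\top \delta \ge 0 \text{ for all } \delta \in D(x^\star)\}$, which is the dual cone of the feasible-direction cone $\FD(x^\star)$. The statement to prove is that there exists $c$ lying in the \emph{relative interior} of $\Lambda(x^\star)$ restricted appropriately---more precisely, a $c$ with $c^\top \delta > 0$ for every extreme direction $\delta \in D(x^\star)$. The key point is that such a $c$ makes $x^\star$ the \emph{unique} minimizer: if $x^\star + \varepsilon\delta$ were also optimal for some feasible direction $\delta$, then $c^\top\delta = 0$, and since every feasible direction is a nonnegative combination of extreme directions $D(x^\star)$ (as $\FD(x^\star)$ is a polyhedral cone), $c^\top\delta > 0$ for all extreme directions forces $c^\top\delta > 0$ for all nonzero feasible directions, a contradiction. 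Hence $\arg\min_{x\in\cX} c^\top x = \{x^\star\}$.

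The concrete construction I would use: since $D(x^\star)$ is a finite set of nonzero vectors (the extreme rays of the polyhedral cone $\FD(x^\star)$), simply take $c := \sum_{\delta \in D(x^\star)} \delta / \norm{\delta}$, or more robustly $c := \sum_{\delta \in D(x^\star)} \delta$. Then for any fixed $\delta' \in D(x^\star)$ we have $c^\top \delta' = \sum_{\delta \in D(x^\star)} \delta^\top \delta'$. This is not obviously positive because distinct extreme rays can make obtuse angles. So the naive sum does not immediately work, and this is the main obstacle.

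To get around it, I would instead argue via an interior-point / Minkowski-type argument on the dual cone. The cone $\FD(x^\star)$ is pointed after quotienting by its lineality space; working inside $F_0 \cap \Ker A$ (which contains $\FD(x^\star)$ by \cref{feasible directions polyhedral cone}), consider the cone $C := \FD(x^\star)$ and its dual $C^* = \Lambda(x^\star)$ taken within that ambient subspace. Standard polyhedral-cone theory gives that $\mathrm{relint}(C^*) = \{c : c^\top \delta > 0 \text{ for all } \delta \in C \setminus \mathrm{lineality}(C)\}$, and in particular $c^\top\delta>0$ for all extreme rays $\delta\in D(x^\star)$ provided $\FD(x^\star)$ is pointed. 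If $\FD(x^\star)$ has a nontrivial lineality space $L$, then $x^\star$ would not be an extreme point (moving along $\pm\ell$ for $\ell\in L$ stays feasible, contradicting extremality via \cref{feasible directions polyhedral cone}'s consequence $\FD(x^\star)\subset\Ker A$ combined with the coordinate constraints $\delta_j\ge 0$ for $j$ with $x^\star_j=0$)---so $\FD(x^\star)$ is pointed, hence $\mathrm{relint}(C^*)$ is nonempty and any $c$ in it works. Picking such a $c$ and extending it by $0$ on the orthogonal complement of $F_0\cap\Ker A$ yields the desired cost vector in $\R^d$.

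The main obstacle, then, is the clean verification that $\FD(x^\star)$ is pointed (equivalently, has trivial lineality space) precisely because $x^\star$ is an extreme point: if $\delta$ and $-\delta$ are both feasible directions with $\delta\ne 0$, then $x^\star\pm\varepsilon\delta\in\cX$ for small $\varepsilon>0$, so $x^\star = \tfrac12(x^\star+\varepsilon\delta) + \tfrac12(x^\star-\varepsilon\delta)$ is a proper convex combination of two distinct feasible points, contradicting $x^\star\in\cX^\angle$. Once pointedness is in hand, the existence of a strictly-positive-on-all-extreme-rays functional is a standard fact about pointed polyhedral cones, and the lemma follows.
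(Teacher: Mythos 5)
Your proposal is correct, and it reaches the lemma by a route that is organized differently from the paper's, even though the two share the same underlying ingredients (extremality of $x^\star$ versus lines in $\FD(x^\star)$, plus a separation/duality step). The paper argues by contradiction: assuming no valid $c$ exists, it first extracts a single extreme direction $\delta_N$ orthogonal to the whole optimality cone $\Lambda(x^\star)$ (via the summing trick you correctly flagged as insufficient for a \emph{direct} construction, but which works fine in the contrapositive), then runs a hands-on separation argument to show $-\delta_N$ lies in the cone generated by the remaining extreme directions, so that $\pm\delta_N$ are both feasible directions, contradicting $x^\star\in\cX^\angle$. You instead argue directly: extremality of $x^\star$ forces $\FD(x^\star)$ to be pointed (your convex-combination argument for this is exactly right), and then the standard fact that the dual of a pointed polyhedral cone has a (relatively) interior point that is strictly positive on every nonzero element of the cone---in particular on every extreme ray---produces $c$; your additional observation that strict positivity on $D(x^\star)$ propagates to all nonzero feasible directions (since a pointed polyhedral cone is generated by its extreme rays) and hence gives $\arg\min_{x\in\cX}c^\top x=\{x^\star\}$ is a useful explicit step that the paper leaves implicit in the ``i.e.''\ of the statement. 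What your approach buys is a shorter, more modular argument resting on a citable cone-duality fact; what the paper's buys is self-containedness, since it performs the separation explicitly rather than invoking the interior-of-dual-cone characterization (your displayed formula for $\relint{C^*}$ with a lineality space is slightly off in general, but you only use the pointed case, where it is correct, so this is cosmetic).
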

\begin{proof}
    Let $x^\star \in \cX^\angle$. Assume that such a $c\in \R^d$ does not exists. We first show that there exists $\delta^\star \in D(x^\star)$ such that $\Lambda(x^\star)\perp \delta^\star$. Suppose no such $\delta^\star$ exists, then for any $\delta \in D(x^\star)$, there would exist $v(\delta)\in \Lambda(x^\star)$ such that $v(\delta)^\top \delta >0$. Consequently, we have for any $\delta \in D(x^\star)$,
    \begin{align*}
        \left(\sum_{\delta'\in D(x^\star)}^{}v(\delta')\right)^\top \delta >0,
    \end{align*}
    which contradicts our initial assumption. 
    
    Let $N\in \mathbb N$ and $\delta_1,\dots,\delta_N$ such that $D(x^\star)=\{\delta_1,\dots,\delta_N\}$. Assume without loss of generality that $ \Lambda (x^\star)\perp \delta_N$. Consequently, we have for all $c \in \Re^d$
    \begin{align*}
        \left(\forall i \in [N-1],\; c^\top \delta_i\geq 0\right)\implies c^\top \delta_N\leq 0.
    \end{align*}
    We show that this implies that $-\delta_N$ belongs to the cone spanned by $\delta_1,\dots,\delta_{N-1}$, i.e. there exists $\mu_1,\dots,\mu_{N-1}\in \R^+$ such that $\sum_{i=1}^{N-1}\mu_i\delta_i=-\delta_N$. Assume that this is not true. Let $K$ be the cone spanned by $\delta_1,\dots,\delta_{N-1}$. Since $-\delta_N\not\in K$, then (by the separation lemma), there exists $u\in \R^d$ such that for all $h\in K$, we have $u^\top h\geq 0$ and $-u^\top \delta_N <0$. In particular, we have for all $i\in [N-1],$ $u^\top \delta_i\geq 0$ and $u^\top \delta_N>0$, a contradiction. Hence, there exists $\alpha_1,\dots,\alpha_{N-1}\in \R^+$ such that 
    \begin{align*}
        -\delta_N=\sum_{i=1}^{N-1}\alpha_i \delta_i.
    \end{align*}

    Consequently, both $\delta_N$ and $-\delta_N$ are feasible directions from $x^\star$ in $\cX$, which contradicts the fact that $x^\star$ is an extreme point.
\end{proof}

We now prove Proposition \ref{prop:equivalence-argmin}.
\begin{proof}

    It is easy to see that \ref{item:fullargmin} implies \ref{item:singlesol}. We now prove that \ref{item:singlesol} implies \ref{item:fullargmin}. Assume that \ref{item:singlesol} is verified but not \ref{item:fullargmin}, that is $\cD$ is not a \sufficient{}. From Theorem \ref{thm: relatively open characterization}, there exists $\delta\in \Delta(\cX,\cC)$ such that $\delta \not\perp  (\Vect \cD)^\perp$. By definition, there exists $x^\star \in \cX^\angle$ and $c\in \cC$ such that $c\in F(x^\star,\delta)$. From Lemma \ref{existence of unique argmin}, there exists $v\in \R^d$ such that for all $\delta \in D (x^\star)$, $v^\top \delta >0$. Let $\varepsilon>0$ such that $B(c,\varepsilon)\subset \cC$. Let $\eta>0$ small enough such that $c+\eta v\in B(c,\varepsilon)$, and $\eta '$ small enough such that $c_{\eta,\eta'}:=c+\eta v - \eta '\delta_{(\Vect \cD)^\perp}\in B(c,\varepsilon)$. For any $\delta' \in D(x^\star)$, we have
    \begin{align*}
        (c+\eta v)^\top \delta' = \underbrace{c^\top \delta'}_{\geq 0} +\underbrace{\eta v^\top \delta'}_{>0} >0.
    \end{align*}
    This means that $\arg\min_{x\in \cX}(c+\eta v)^\top x=\{x^\star\}$. Furthermore, we have 
    \begin{align*}
        c_{\eta,\eta'}^\top \delta = \underbrace{c^\top \delta}_{=0, \text{ as }c\in F(x^\star,\delta)} +\eta v^\top \delta - \eta ' \underbrace{\norm{\delta_{ (\Vect(\cD))^\perp}\textbf{}}^2}_{\neq 0, \text{ as }\delta \not\perp  (\Vect \cD)^\perp }.
    \end{align*}
    
    Consequently, when $\eta$ is small enough compared to $\eta'$, we have $c_{\eta,\eta'}^\top \delta<0$, i.e. $c_{\eta,\eta'} \not\in \Lambda(x^\star)$. This means that $x^\star \not\in \arg\min_{x\in \cX}c_{\eta,\eta'}^\top x $. Assume that a mapping $\hat{x}$ satisfying condition \ref{item:singlesol} of the proposition. We have $c+\eta v-c_{\eta,\eta'}=\eta'\delta_{ (\Vect(\cD))^\perp}\in (\Vect(\cD))^\perp$. This means that for all $i\in [N]$, we have $(c+\eta v)^\top q_i=c_{\eta,\eta'}^\top q_i$, and hence 
    \begin{align*}
        \hat{x}((c+\eta v)^\top q_1,\dots,(c+\eta v)^\top q_N)=\hat{x}(c_{\eta,\eta'}^\top q_1,\dots,c_{\eta,\eta'}^\top q_N),
    \end{align*}
    which implies that
    \begin{align*}
        \hat{x}(c_{\eta,\eta'}^\top q_1,\dots,c_{\eta,\eta'}^\top q_N)\in \left(\arg\min_{x\in \cX}c_{\eta,\eta'}^\top x\right)\cap \left(\arg\min_{\cX}(c+\eta v)^\top x\right)=\varnothing,
    \end{align*}
    which is impossible.

\end{proof}


\section{Proof of \cref{thm:alg_termination}: Correctness}
\begin{proof}\;
\begin{itemize}
    \item We first show that when the algorithm terminates, i.e., the condition of the while loop is no longer satisfied, then with probability 1, $\dir{\dualpoly{\cX}{\cC}}\subset\Vect \cD$.
    Notice that the constraints in the minimization and maximization problems in Algorithm  \ref{algorithm to compute delta} encode complimentary slackness and, therefore are equivalent to $$\min / \max \{ \alpha^\top \proj{(\Vect \cD)^\perp}{x^\star - x_0}  \; : \; c \in \cC, \; x^\star \in \argmin_{x \in \cX} c^\top x\}.$$
    By definition of $\dualpoly{\cX}{\cC}$, this equivalent to $$\min / \max \{ \alpha^\top \proj{(\Vect \cD)^\perp}{x^\star - x_0}  \; : \; x^\star \in \dualpoly{\cX}{\cC}\}.$$
    
     If the two problems have an optimal value equal to $0$, then $\proj{(\Vect \cD)^\perp}{\dir{\dualpoly{\cX}{\cC}}}\perp \alpha$ i.e. $\alpha \in \proj{(\Vect \cD)^\perp}{\dir{\dualpoly{\cX}{\cC}}}^\perp$. Unless $\proj{(\Vect \cD)^\perp}{\dir{\dualpoly{\cX}{\cC}}}^\perp=\R^d$, this set is of empty interior and its Lebesgue measure is equal to $0$, and consequently the probability of having $\proj{(\Vect \cD)^\perp}{\dir{\dualpoly{\cX}{\cC}}}\perp \alpha$ is zero since $\alpha$ has a continuous distribution. Hence, with probability $1$, we have $\proj{(\Vect \cD)^\perp}{\dir{\dualpoly{\cX}{\cC}}}=\{0\}$ i.e. $\dir{\dualpoly{\cX}{\cC}}\subset\Vect \cD$.

    \item  We now show that at every step of the algorithm, the dimension of the span of $\cD$ increases by $1$, and that it remains a linearly independent set, as well as satisfies $\Vect \cD \subset \dir{\dualpoly{\cX}{\cC}}$. Indeed, initially, $\cD$ is empty and is hence a linearly independent set and satisfies $\Vect \cD \subset \dir{\dualpoly{\cX}{\cC}}$. Assuming that $\cD$ is a linearly independent set and that $\Vect \cD \subset \dir{\dualpoly{\cX}{\cC}}$, if there exists $x\in \dualpoly{\cX}{\cC}$ such that $\alpha^\top \proj{(\Vect \cD)^\perp}{x_0-x}\neq 0$, then $\proj{(\Vect \cD)^\perp}{x_0-x}\neq 0$ with probability $1$ and consequently $x_0-x\in \dir{\dualpoly{\cX}{\cC}}\setminus \Vect \cD$. Hence, we have $\dim (\Vect (\cD\cup\{x_0-x\}))=\dim (\Vect \cD)+1$ and $\cD\cup\{x_0-x\}$ is a linearly independent set and satisfies $\Vect (\cD\cup\{x_0-x\}) \subset \dir{\dualpoly{\cX}{\cC}}$, which proves the desired result.
    \item Finally, combining the two results above, when the algorithm terminates, $\cD$ is a linearly independent set, and $\Vect \cD=\dir{\dualpoly{\cX}{\cC}}$ i.e. $\cD$ is a basis of $\dir{\dualpoly{\cX}{\cC}}$ with probability 1. Furthermore, the analysis above show that the algorithm indeed terminates after $\dim \dir{\dualpoly{\cX}{\cC}}$ iterations of the while loop.
\end{itemize}

\end{proof}

\section{Useful Lemmas}

  \begin{lemma} \label{cone equivalence}
            Assume that $\cX$ is bounded. For every $c\in \R^d$, $\arg\min_{x\in \cX}c^\top x$ is a polyhedron, and all of its extreme points are extreme points in $\cX$. Recall the optimality cones $\Lambda(x^\star)$ of all $x^\star \in \cX^\angle$ defined in \cref{optimality cone def}.  For every $c,c'\in \R^d$, the following equivalence holds.
            \begin{align*}
                \arg\min_{x\in \cX}c^\top x=\arg\min_{x\in \cX}c'^\top x \Longleftrightarrow \forall x \in \cX^\angle,\; \left(c\in \Lambda (x)\Longleftrightarrow c'\in \Lambda(x)\right).
            \end{align*}
        \end{lemma}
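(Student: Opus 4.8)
The plan is to reduce the claimed equivalence to the standard fact that a nonempty bounded polyhedron is the convex hull of its extreme points, together with a precise identification of the extreme points of each optimal face. First I would record the structural facts asserted at the start of the lemma. For fixed $c$, write $v^\star = \min_{x\in\cX}c^\top x$ (attained since $\cX$ is nonempty and bounded; the empty case is trivial). Then $\arg\min_{x\in\cX}c^\top x = \cX \cap \{x : c^\top x = v^\star\}$ is the intersection of the polyhedron $\cX$ with a hyperplane, hence a polyhedron, and it is bounded because $\cX$ is. If $x^\star$ is an extreme point of this optimal face and $x^\star = \lambda y + (1-\lambda)z$ with $y,z\in\cX$ and $\lambda\in(0,1)$, then $v^\star = c^\top x^\star = \lambda c^\top y + (1-\lambda)c^\top z$ together with $c^\top y, c^\top z \geq v^\star$ forces $c^\top y = c^\top z = v^\star$, so $y,z$ lie in the optimal face; hence $x^\star$ cannot be decomposed there unless $y = z = x^\star$, and so every extreme point of $\arg\min_{x\in\cX}c^\top x$ is an extreme point of $\cX$.

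Next I would pin down exactly which extreme points of $\cX$ appear. Set $F_c := \arg\min_{x\in\cX}c^\top x$. The argument above, read in reverse, shows that $\cX^\angle \cap F_c$ is precisely the set of extreme points of $F_c$: if $x\in\cX^\angle$ and $x\in F_c$, then any convex decomposition $x=\lambda y+(1-\lambda)z$ with $y,z\in F_c\subseteq\cX$ forces $y=z=x$ by extremality in $\cX$. By the definition of the optimality cones (\cref{optimality cone def}), $x\in F_c \iff c\in\Lambda(x)$, so the extreme points of $F_c$ are exactly $\{x\in\cX^\angle : c\in\Lambda(x)\}$, and likewise the extreme points of $F_{c'}$ are $\{x\in\cX^\angle : c'\in\Lambda(x)\}$.

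With these two ingredients the equivalence is short. For the forward direction, if $F_c = F_{c'}$ then for every $x\in\cX^\angle$ we have $c\in\Lambda(x)\iff x\in F_c \iff x\in F_{c'}\iff c'\in\Lambda(x)$. For the converse, suppose $c\in\Lambda(x)\iff c'\in\Lambda(x)$ for all $x\in\cX^\angle$; then $F_c$ and $F_{c'}$ have the same extreme-point set by the previous paragraph. Since both are nonempty bounded polyhedra, each equals the convex hull of its own extreme points, so $F_c=\conv(\cX^\angle\cap F_c)=\conv(\cX^\angle\cap F_{c'})=F_{c'}$.

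I do not anticipate a serious obstacle: the only nontrivial input is the Minkowski--Krein--Milman fact that a bounded polyhedron is the convex hull of its vertices, which is standard. The one point to handle with care is the identification of the vertices of the optimal face with $\{x\in\cX^\angle:c\in\Lambda(x)\}$ --- that is, that ``extreme in the face'' and ``extreme in $\cX$ and lying in the face'' coincide --- since the whole converse direction hinges on comparing these vertex sets rather than the faces directly.
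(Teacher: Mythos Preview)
Your proposal is correct and follows essentially the same approach as the paper's proof: establish that the extreme points of the optimal face are exactly $\cX^\angle\cap F_c$, then use that a bounded polyhedron equals the convex hull of its vertices to convert ``same vertex set'' into ``same face.'' If anything, you are slightly more explicit than the paper in checking both inclusions of the vertex identification (extreme in $F_c$ $\Leftrightarrow$ extreme in $\cX$ and lying in $F_c$), which is exactly the point you flagged as the one to handle with care.
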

        
        \begin{proof}
    
            We first show that for any $c\in \R^d$, any extreme point in $\arg\min_{x\in \cX}c^\top x$ is in $\cX^\angle$. Let $x\in \cX$ be an extreme point of $\arg\min_{x\in \cX}c^\top x$. Assume that $x$ is not an extreme point in $\cX$. Hence, there exists $u\in \R^d$ such that $x\pm u\in \cX$. If $c^\top u\neq 0$ we have $c^\top (x-u)<c^\top x$ or $c^\top (x+u)<c^\top x$. This means that $x\not\in \arg\min_{x\in \cX}c^\top x$ which is impossible.
            If $c^\top u=0$, then $c\pm u\in \arg\min_{x\in \cX}c^\top x$, which is also impossible since $x$ is an extreme point in $\arg\min_{x\in \cX}c^\top x$. Hence, since $\arg\min_{x\in \cX}c^\top x$ is convex, it's the convex hull of its extreme points.

            Consequently, for any $c,c'\in \R^d$, $\arg\min_{x\in \cX}c^\top x=\arg\min_{x\in \cX}c'^\top x$ if and only if these two sets have the same set of extreme points. Furthermore, for any $x\in \cX^\angle$, we have $c\in \Lambda(x)$ if on and only if $x\in \arg\min_{x\in \cX}c^\top x$. Hence, the desired result immediately follows:
            \begin{align*}
                \arg\min_{x\in \cX}c^\top x=\arg\min_{x\in \cX}c'^\top x&\Longleftrightarrow \left(\forall x\in \cX^\angle,\; x\in \arg\min_{x\in \cX}c^\top x \Longleftrightarrow x\in \arg\min_{x\in \cX}c'^\top x \right)\\
                & \Longleftrightarrow \left(\forall x \in \cX^\angle,\; c\in \Lambda(x)\Longleftrightarrow c'\in \Lambda(x)\right)
            \end{align*}
        \end{proof}
\begin{lemma} \label{lemma:feasible directions in general polyhedron}
            Let $r=\dim F_0 \cap \Ker A$. For any $x\in \cX$, there exists a set $V=\{\delta_1,\dots,\delta_r\}\subset \FD(x)$, such that $V$ is a basis of $F_0 \cap \Ker A$. In particular, the set of extreme directions of $\FD(x)$ spans $F_0\cap \Ker A$.
        \end{lemma}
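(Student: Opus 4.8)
The plan is to prove the sharper statement $\Vect \FD(x) = F_0 \cap \Ker A$ for every $x \in \cX$, from which both conclusions of the lemma drop out. The workhorse is the coordinate description of feasible directions already isolated in the proof of \cref{feasible directions polyhedral cone}: with $J := \{i \in [d] : x_i = 0\}$ one has $\delta \in \FD(x) \iff A\delta = 0$ and $\delta_j \geq 0$ for all $j \in J$, and this equivalence never uses that $x$ is an extreme point, so it is valid for arbitrary $x \in \cX$. It gives immediately $\FD(x) \subset \Ker A$; moreover every $\delta \in \FD(x)$ is supported on $I_0 := \{i : \exists x' \in \cX,\ x'_i \neq 0\}$ (since both $x$ and $x + \varepsilon \delta$ are feasible, hence supported there), so $\FD(x) \subset F_0$ as well, and thus $\Vect \FD(x) \subset F_0 \cap \Ker A$.

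For the reverse inclusion I would interpose the direction space $\dir{\cX} = \Vect\{x_1 - x_2 : x_1, x_2 \in \cX\}$. On one side, for any $x' \in \cX$ the vector $x' - x$ satisfies $A(x' - x) = 0$ and $(x' - x)_j = x'_j \geq 0$ for $j \in J$, so $x' - x \in \FD(x)$; taking spans gives $\dir{\cX} \subset \Vect \FD(x)$. On the other side, to obtain $F_0 \cap \Ker A \subset \dir{\cX}$, choose for each $i \in I_0$ a feasible point $x^{(i)}$ with $x^{(i)}_i \neq 0$ and let $\bar x$ be their average: then $\bar x \in \cX$, $\bar x_i > 0$ for every $i \in I_0$, and $\bar x_i = 0$ for $i \notin I_0$. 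Any $\delta \in F_0 \cap \Ker A$ is supported on $I_0$ and lies in $\Ker A$, so $\bar x + \varepsilon \delta$ is feasible for all small $\varepsilon > 0$ --- the $I_0$-coordinates stay positive, the remaining coordinates stay at $0$, and the affine constraint $A(\cdot) = b$ is preserved --- hence $\delta = \varepsilon^{-1}\big((\bar x + \varepsilon \delta) - \bar x\big) \in \dir{\cX}$. Chaining the inclusions yields $\Vect \FD(x) = \dir{\cX} = F_0 \cap \Ker A$, an $r$-dimensional space, so $\FD(x)$ contains $r$ linearly independent vectors, which furnish the basis $V = \{\delta_1,\dots,\delta_r\}$ demanded by the lemma. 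For the closing sentence, recall from \cref{feasible directions polyhedral cone} that $\FD(x)$ is a polyhedral cone; when $x$ is an extreme point of $\cX$ it is pointed, since $\R \delta \subset \FD(x)$ would exhibit $x$ as the midpoint of the distinct feasible points $x \pm \varepsilon \delta$, and a pointed polyhedral cone is the conic hull of its finitely many extreme directions (Minkowski--Weyl), which therefore span $\Vect \FD(x) = F_0 \cap \Ker A$.

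The only step carrying real content is $F_0 \cap \Ker A \subset \Vect \FD(x)$, where one must manufacture enough feasible directions out of the polyhedral structure. The device that makes this effortless is the averaged witness $\bar x$ --- a single feasible point strictly positive on every coordinate that is active somewhere in $\cX$ --- so that perturbing $\bar x$ along any candidate direction remains feasible in both senses, sidestepping any appeal to relative interiors or to a case split on the location of $x$. Everything else is the coordinate characterization of $\FD(x)$, a one-line extraction of a basis, and the standard pointed-cone fact.
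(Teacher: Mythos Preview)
Your proof is correct and takes a cleaner route than the paper's. Both arguments rest on the same device---a feasible point $\bar x$ that is strictly positive on every coordinate in $I_0$---but you build it by averaging one witness per active coordinate, whereas the paper forms a strict convex combination of all extreme points (and extreme rays, redundant here since $\cX$ is bounded). The real divergence is in passing from $\bar x$ to an arbitrary $x$. The paper takes a basis $v_1,\dots,v_r$ of $F_0\cap\Ker A$, writes down the candidate feasible directions $\{\bar x + \eta v_i - x\}_{i\in[r]}$, and must then verify their linear independence via a somewhat fiddly computation in the coefficients $\alpha_i,\beta_i$ that forces a careful choice of $\eta$. You bypass this entirely by working at the level of spans and routing through $\dir{\cX}$: the one-line observation $x'-x\in\FD(x)$ for every $x'\in\cX$ gives $\dir{\cX}\subset\Vect\FD(x)$ for free, and the $\bar x$ step then yields $F_0\cap\Ker A\subset\dir{\cX}$. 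Your argument is shorter and more conceptual; the paper's is more constructive in that it names explicit basis vectors. You also spell out the ``in particular'' clause about extreme directions (pointedness plus Minkowski--Weyl), which the paper's proof leaves unaddressed.
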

\begin{proof}
            Let $x\in \cX$. Let $\cX^\angle$ be the set of extreme points of $\cX$, and $D^\angle$ be the set of extreme rays of $\cX$. For every $x^\angle \in \cX^\angle$ and $\delta^\angle \in D^\angle$, we have $x^\angle \geq 0$ and $\delta^\angle \geq 0$. Let $\{\alpha_{x^\angle}\}_{x^\angle \in \cX^\angle}\subset \R^*_+$ and $\{\alpha_{\delta^\angle}\}_{\delta^\angle \in D^\angle}\subset \R^*_+$ be a set of strictly positive numbers such that $\sum_{x^\angle \in \cX^\angle}^{}\alpha_{x^\angle}=1$. We define 
            \begin{align*}
                \overline{x}=\sum_{x^\angle \in \cX^\angle}^{}\alpha_{x^\angle}x^\angle + \sum_{\delta^\angle \in D^\angle}^{}\alpha_{\delta^\angle}\delta^\angle\in \cX.
            \end{align*}

        We have for any $i\in I_0$, $\overline{x}_i>0$. Indeed, for any $i\in I_0$, if $\overline{x}_i=0$, then for every $x^\angle \in \cX^\angle$ and $\delta^\angle \in D^\angle$, $x_i^\angle=\delta_i^\angle =0$ and hence for any $x'\in \cX$, $x'_i=0$ i.e. $i\not \in I_0$ which is impossible. Let $$\varepsilon:=\frac{1}{2}\min_{i\in I_0}\overline{x}_i>0,$$ for any $\delta \in F_0 \cap \Ker A$ such that $\norm{\delta}<\varepsilon$, we have $A(\overline{x}+\delta)=A\overline{x}=b$, and for every $i\in [d]$, if $i\in I_0$, then $\overline{x}_i + \delta _i >0$ and if $i\not\in I_0$, $\overline{x}_i=\delta_i=0$ and consequently $x+\delta \geq 0$, i.e. $\delta$ is a feasible direction for $\overline{x}$. Hence, every element of $B(0,\varepsilon)\cap F_0 \cap \Ker A$ is a feasible direction for $\overline{x}$, and consequently any element of $F_0 \cap \Ker A$. Let $x\in \cX$, and $v_1,\dots, v_r$ a basis of $F_0\cap \Ker A$ such that for every $i\in [r],\; \norm{v_i}=1$. Let $\eta \in \R^*_+$ small enough such that $\forall i \in [r]$, $\overline{x} + \eta \delta _i \in \cX$. We would like to show that for a well-chosen value of $\eta$, the following set of feasible directions for $x$, $\{\overline{x}+\eta v_1 - x,\dots,\overline{x}+\eta v_r - x,\}$ is a basis of $F_0 \cap \Ker A$. Since $\overline{x}-x\in F_0 \cap \Ker A$, we consider $\beta_1,\dots,\beta_r\in \R$ such that 
        \begin{align*}
            \overline{x}-x=\sum_{i=1}^{r}\beta_i v_i.
        \end{align*}
        Let $\alpha_1,\dots,\alpha_r\in \R$. We have 
        \begin{align*}
            \sum_{i=1}^{r}\alpha_i(\overline{x}+\eta v_i-x)=0 &\Longrightarrow \underbrace{\left(\sum_{i=1}^{r}\alpha_i\right)}_{:=A}(\overline{x}-x)+\eta \sum_{i=1}^{r}\alpha_i v_i=0\\
            &\Longrightarrow \sum_{i=1}^{r}(A\beta_i + \eta \alpha_i)v_i=0\\
            &\Longrightarrow \forall i \in [r],\; A\beta_i + \eta \alpha_i=0\\
            &\Longrightarrow \underbrace{ \forall i\in [r], \alpha_i=0 \text{ or }\left( A\neq 0 \text{ and } \forall i \in [r],\;\frac{\alpha_i}{A}=-\frac{\beta_i}{\eta}\right)}_{(*)}.
        \end{align*}
        By summing over $i$ in the last equality above, we get
        \begin{align*}
            (*)&\Longrightarrow  \forall i\in [r], \alpha_i=0 \text{ or }\left( A\neq 0 \text{ and } 1=-\frac{1}{\eta}\smash{\underbrace{\sum_{i=1}^{r}\beta_i}_{B}} \right)
        \end{align*}
        
        \vspace{5mm}
        
        The equality above is equivalent to $\eta = -B$. Hence, it suffices to take $\eta\neq -B$ and $\eta$ small enough to ensure that $\{\overline{x}+\eta v_1 - x,\dots,\overline{x}+\eta v_r - x,\}$ is linearly independent and is a set of feasible directions for $x$, and consequently since all of these vectors are elements of $F_0 \cap \Ker A$, and there are $r=\dim F_0 \cap \Ker A$ of them, $\{\overline{x}+\eta v_1 - x,\dots,\overline{x}+\eta v_r - x,\}$ is indeed a set of feasible directions for $x$ that is a basis of $F_0\cap \Ker A$.
        \end{proof}
 
\section{Further Notes on Experiments of \cref{sec:experiments}} \label{appendix: additional notes on experiments}

\textbf{Note.}
The hiring scenarios considered in this paper are stylized decision models intended to illustrate how data informativeness depends on task structure and prior uncertainty. While some formulations include group-based constraints (e.g., per-category quotas), these are not meant to prescribe or endorse any specific hiring policy.

\end{document}